\begin{document}

\begin{frontmatter}
  \title{The Distributive Full Lambek Calculus with Modal Operators}
  \author{Daniel Rogozin} \footnote{The research is supported by the Presidential Council, research grant MK-430.2019.1.}
  \address{Lomonosov Moscow State University \\ Moscow, Russia \\ Serokell O\"{U} \\ Tallinn, Estonia}

  \begin{abstract}
In this paper, we study logics of bounded distributive residuated lattices with modal operators considering $\Box$ and $\Diamond$ in a noncommutative setting.  We introduce relational semantics for such substructural modal logics. We prove that any canonical logic is Kripke complete via discrete duality and canonical extensions. That is, we show that a modal extension of the distributive full Lambek calculus is the logic of its frames if its variety is closed under canonical extensions. After that, we establish a Priestley-style duality between residuated distributive modal algebras and topological Kripke structures based on Priestley spaces.
  \end{abstract}

  \begin{keyword}
  The Lambek calculus, canonical extensions, bounded distributive lattice expansions, Priestley-style duality, residuated lattices
  \end{keyword}
 \end{frontmatter}

\section{Introduction}

Substructural logic is logic lacking some of the well-known structural rules such as contraction, weakening, or exchange. Algebraically, substructural logics represent ordered residuated algebras \cite{galatos2007residuated}. One may examine modalities in substructural logics at least in two perspectives. Modalities in those logics are pretty helpful is such philosophical issues such as a relevant necessity. The second perspective is more applied and related to such issues as resource management. Here, linear logic proved its efficiency in computer science and linguistics.

The computer science applications are more related to resource-sensitive computation and
related applications in type theory \cite{abramsky1993computational} \cite{girard1987linear}.

The noncommutative version of linear logic, the Lambek calculus \cite{lambek1958mathematics}, has plenty of applications in linguistics, for instances, proof-theoretical characterisation of inference in Lambek grammars, the equivalent version of context-free grammars \cite{pentus1993lambek}. The Lambek calculus characterises inference in categorial grammars. There are several approaches to consider categorial grammars from a broader modal point of view as well, see the paper by van Benthem \cite{vanbenthem2003categorial}, for example.
Modalities in such logics (the $!$-modality, to be more precise) introduce lacking structural rules in a restricted way:

\begin{prooftree}
  \AxiomC{$\Gamma, \Delta \Rightarrow B$}
  \UnaryInfC{$\Gamma, ! A, \Delta \Rightarrow B$}
\end{prooftree}
\begin{minipage}{0.5\textwidth}
  \begin{flushleft}
    \begin{prooftree}
      \AxiomC{$\Gamma, ! A, B, \Delta \Rightarrow C$}
      \UnaryInfC{$\Gamma, B, ! A, \Delta \Rightarrow C$}
    \end{prooftree}
  \begin{prooftree}
  \AxiomC{$\Gamma, ! A, B, ! A, \Delta \Rightarrow C$}
  \UnaryInfC{$\Gamma, ! A, B, \Delta \Rightarrow C$}
  \end{prooftree}
  \end{flushleft}
\end{minipage}\hfill
\begin{minipage}{0.5\textwidth}
  \begin{flushright}
    \begin{prooftree}
      \AxiomC{$\Gamma, ! A, ! A, \Delta \Rightarrow B$}
      \UnaryInfC{$\Gamma, ! A, \Delta \Rightarrow B$}
    \end{prooftree}
  \begin{prooftree}
  \AxiomC{$\Gamma, ! A, B, ! A, \Delta \Rightarrow C$}
  \UnaryInfC{$\Gamma, B, ! A, \Delta \Rightarrow C$}
  \end{prooftree}
  \end{flushright}
\end{minipage}

  \vspace{\baselineskip}

Modal extensions of linear logic have an interpretation within the context of resource management based on the phase semantics proposed by Girard \cite{girard1995linear}. Algebraically, exponential modalities were studied by Ono as additional exponential operators on FL algebras \cite{ono1993semantics}. The Lambek calculus and its modal extensions also have the cover semantics proposed by Goldblatt \cite{goldblatt2011grishin} \cite{goldblatt2016cover}.
The abstract polymodal case of such an extension of the full Lambek calculus was recently studied by Kanovich, Kuznetsov, Scedrov, and Nigam \cite{kanovich2019subexponentials}. In this paper, (sub)exponential modalities are considered from proof-theoretical and complexity perspectives.

The basic Lambek calculus is complete with respect to so-called language models, residuated semigroups on subsets of free semigroup \cite{PENTUS1995179}. The Lambek calculus is complete with respect to subsets of a transitive relation as well, see \cite{Andreka1994}. Dunn, Gehrke, Palmigiano, and other authors considered relational semantics for the Lambek calculus using canonical extensions \cite{chernilovskaya2012generalized} \cite{dunn2005}. Alternatively, one may consider bi-approximation semantics for substructural logic studied by Suzuki \cite{Suzuki2010-SUZBSF-3}.

In this paper, we study the distributive version of the full Lambek calculus extended with normal modal operators $\Box$ and $\Diamond$ to consider a broader class of noncommutative modalities as an abstraction of storage operators in noncommutative linear logic in a distributive setting. We introduce noncommutative Kripke frames, relational structures for the distributive Lambek calculus extended with binary modal relations. We establish a discrete duality between such Kripke frames and perfect distributive residuated modal algebras developing an approach proposed in \cite{Distr}. After that, we overview canonical extensions of related modal algebras applying techniques provided in \cite{dunn2005} \cite{gehrketopological} \cite{Gehrke_Jonsson_2004} to show that any canonical residuated distributive modal logic is Kripke complete. We also prove that the subexponential modality axioms are canonical ones. Thus, we show that the corresponding logics enriched with subexponentials are Kripke complete. Finally, we extend the obtained duality to topological duality between residuated distributive modal algebras and special topological Kripke spaces based on bDRL-spaces, Priestley spaces with a ternary relation dual algebras of which are bounded distributive residuated lattices \cite{galatos2003varieties}. We also use some ideas from positive modal and intuitionistic modal logics \cite{celani1999priestley} \cite{palmigiano2004dualities}.

The text contains a short appendix with the brief survey on canonical extensions and duality for bounded distributive lattices to keep the paper self-contained.
\section{The distributive Lambek calculus with modal operators}
In this section, we formulate the distributive full Lambek calculus enriched with modal operators. We represent such logics with pairs of formulas that have the form $\varphi \vdash \psi$. One needs to read that as ``$\psi$ is derivable from $\varphi$''.

The language extends the language of the full Lambek calculus with modal operators $\Box$ and $\Diamond$ as follows:

$\varphi, \psi ::= \bot \:
   | \: \top \:
   | \: p \:
   | \: (\varphi \bullet \psi) \:
   | \: (\varphi \setminus \psi) \:
   | \: (\varphi / \psi) \:
   | \: (\varphi \lor \psi) \:
   | \: (\varphi \land \psi) \:
   | \: \Box \varphi \:
   | \: \Diamond \varphi$.

By a \emph{substructural distributive normal modal logic}, we mean some set of pairs $\vartheta \vdash \psi$, where $\vartheta$, $\psi$ are formulas, according to the following definition:
\begin{definition} A substructural normal distributive modal logic is a set of sequents $\Lambda$ that contains the axioms below and closed under the following inference rules:
\begin{center}
  $p \vdash p$
\end{center}
\begin{minipage}{0.5\textwidth}
  \begin{flushleft}
\begin{itemize}
  \item $\bot \vdash p$
  \item $p \vdash \top$
  \item $p_i \vdash p_1 \lor p_2, i = 1, 2$
  \item $p_1 \land p_2 \vdash p_i, i = 1, 2$
  \item $p \land (q \lor r) \vdash (p \land q) \lor (p \land r)$
  \item $(p \bullet q) \bullet r \dashv \vdash p \bullet (q \bullet r)$
\end{itemize}
\end{flushleft}
\end{minipage}\hfill
\begin{minipage}{0.5\textwidth}
  \begin{flushright}
\begin{itemize}
  \item $p \bullet {\bf 1} \dashv \vdash  {\bf 1} \bullet p \dashv \vdash p$
  \item $\Diamond (p \lor q) \vdash \Diamond p \lor \Diamond q$
  \item $\Diamond \bot \vdash \bot$
  \item $\Box p \land \Box q \vdash \Box (p \land q)$
  \item $\top \vdash \Box \top$
  \item $\Box p \bullet \Box q \vdash \Box (p \bullet q)$
\end{itemize}
\end{flushright}
\end{minipage}

\begin{minipage}{0.5\textwidth}
  \begin{flushleft}
    \begin{prooftree}
    \AxiomC{$\varphi \vdash \psi$}
    \AxiomC{$\psi \vdash \theta$}
    \BinaryInfC{$\varphi \vdash \theta$}
  \end{prooftree}

  \begin{prooftree}
    \AxiomC{$\varphi \vdash \psi$}
    \AxiomC{$\theta \vdash \psi$}
    \BinaryInfC{$\varphi \lor \theta \vdash \psi$}
  \end{prooftree}

  \begin{prooftree}
    \AxiomC{$\varphi \bullet \theta \vdash \psi$}
    \doubleLine
    \UnaryInfC{$\theta \vdash \varphi \setminus \psi$}
  \end{prooftree}

  \begin{prooftree}
    \AxiomC{$\varphi \vdash \psi$}
    \UnaryInfC{$\Box \varphi \vdash \Box \psi$}
  \end{prooftree}
  \end{flushleft}
\end{minipage}\hfill
\begin{minipage}{0.5\textwidth}
  \begin{flushright}
    \begin{prooftree}
    \AxiomC{$\varphi(p) \vdash \psi(p)$}
    \UnaryInfC{$\varphi [p := \gamma] \vdash \psi [p := \gamma]$}
  \end{prooftree}

  \begin{prooftree}
    \AxiomC{$\varphi \vdash \psi$}
    \AxiomC{$\varphi \vdash \theta$}
    \BinaryInfC{$\varphi \vdash \psi \land \theta$}
  \end{prooftree}

  \begin{prooftree}
    \AxiomC{$\theta \vdash \varphi \setminus \psi$}
    \doubleLine
    \UnaryInfC{$\varphi \bullet \theta \vdash \psi$}
  \end{prooftree}

  \begin{prooftree}
    \AxiomC{$\varphi \vdash \psi$}
    \UnaryInfC{$\Diamond \varphi \vdash \Diamond \psi$}
  \end{prooftree}
  \end{flushright}
\end{minipage}
\end{definition}

Substructural normal distributive modal logic extends distributive normal modal logic with residuals, product, and the axiom connecting $\Box$ and $\bullet$. See this paper \cite{Distr} to examine in-depth logics of bounded distributive lattices with modal operators that we extend.

We introduce ternary Kripke frames with the additional binary modal relations. Such a ternary frame might be considered as a noncommutative generalisation of a relevant Kripke, see, e.g., here \cite{Seki2003}. As it is usual in the relational semantics of substructural logic, product and residuals have the ternary semantics as in, e.g., \cite{Andreka1994} \cite{doi:10.1002/malq.19920380113} \cite{routley1973semantics}.

\begin{definition}
  A Kripke frame is a structure $\mathcal{F} = \langle W, \leq, R, R_{\Box}, R_{\Diamond}, \mathcal{O} \rangle$, where $\langle W, \leq, \rangle$ is a partial order, $R$ is a ternary relation on $W$, $R_{\Box}, R_{\Diamond}$ are binary relations on $W$, and $\mathcal{O} \subseteq W$ such that for all $\forall u, v, w, u', v', w' \in W$
  \begin{enumerate}
    \item $R u v w \: \& \: w R_{\Box} w' \Rightarrow \exists x, y \in W \:\:
    R x y w' \: \& \: u R_{\Box} x \: \& \: v R_{\Box} y$
    \item $\exists x \in W (R u w x \: \& \: R x u' v') \Leftrightarrow \exists y \in W (R w u' y \: \& \: R u y v')$
    \item $R u v w \: \& \: u' \leq u \Rightarrow R u' v w$, $R u v w \: \& \: v' \leq v \Rightarrow R u v' w$, $R u v w \: \& \: w \leq w' \Rightarrow R u v w'$
    \item $\forall o \in \mathcal{O} \:\: R v o w \Leftrightarrow R o v w $
    \item $v \leq w \Leftrightarrow \exists o \in \mathcal{O} R v o w$
    \item $\mathcal{O}$ is upwardly closed
    \item $u \leq v \: \& \: v R_{\Box} w \Rightarrow u R_{\Box} w$ and $u \leq v \: \& \: u R_{\Diamond} w \Rightarrow v R_{\Diamond} w$
  \end{enumerate}
\end{definition}

A Kripke model is a Kripke frame equipped with a valuation function that maps each propositional variable to
$\leq$-upwardly closed subset of worlds.

\begin{definition} Let $\mathcal{F} = \langle W, \leq, R, R_{\Box}, R_{\Diamond}, \mathcal{O} \rangle$ be a Kripke frame, a Kripke model is a pair $\mathcal{M} = \langle \mathcal{F}, \vartheta \rangle$, where $\vartheta : \operatorname{PV} \to \operatorname{Up}(W, \leq)$. Here, $\operatorname{Up}(W, \leq)$ is the collection of all upwardly closed sets. The connectives have the following semantics:

  \begin{enumerate}
    \item $\mathcal{M}, w \models p \Leftrightarrow w \in \vartheta(p)$
    \item $\mathcal{M}, w \models \top$, $\mathcal{M}, w \not\models \bot$, $\mathcal{M}, w \models {\bf 1} \Leftrightarrow w \in \mathcal{O}$
    \item $\mathcal{M}, w \models \varphi \bullet \psi \Leftrightarrow \exists u,v \in W \: R u v w \: \& \: \mathcal{M}, u \models \varphi \: \& \: \mathcal{M}, v \models \psi$
    \item $\mathcal{M}, w \models \varphi \setminus \psi \Leftrightarrow \forall u,v \in W \: R u w v \: \& \: \mathcal{M}, u \models \varphi \text{ implies } \mathcal{M}, v \models \psi$
    \item $\mathcal{M}, w \models \psi / \varphi \Leftrightarrow \forall u,v \in W \: R u w v \: \& \: \mathcal{M}, u \models \varphi \text{ implies } \mathcal{M}, v \models \psi$
    \item $\mathcal{M}, w \models \varphi \land \psi \Leftrightarrow \mathcal{M}, w \models \varphi \: \& \: \mathcal{M}, w \models \psi$
    \item $\mathcal{M}, w \models \varphi \lor \psi \Leftrightarrow \mathcal{M}, w \models \varphi \: \text{or} \: \mathcal{M}, w \models \psi$
    \item $\mathcal{M}, w \models \Box \varphi \Leftrightarrow \forall v \in R_{\Box}(w) \:\: \mathcal{M}, v \models \varphi$
    \item $\mathcal{M}, w \models \Diamond \varphi \Leftrightarrow \exists v \in R_{\Diamond}(w)
    \:\: \mathcal{M}, v \models \varphi$
    \item $\mathcal{M}, w \models \varphi \vdash \psi \Leftrightarrow \mathcal{M}, w \models \varphi \Rightarrow \mathcal{M}, w \models \psi$
    \item $\mathcal{M} \models \varphi \vdash \psi \Leftrightarrow \forall w \in W \: \mathcal{M}, w \models \varphi \vdash \psi$
  \end{enumerate}
\end{definition}

The following definitions are also standard ones.

\begin{definition}\label{frame}
  Let $\mathcal{F}$ be a Kripke frame
  \begin{enumerate}
    \item Let $\varphi \vdash \psi$ be a sequent, then $\mathcal{F} \models \varphi \vdash \psi$ iff for all valuations $\vartheta$ $\langle \mathcal{F}, \vartheta \rangle \models \varphi \vdash \psi$.
    \item $\operatorname{Log}(\mathcal{F}) = \{ \varphi \vdash \psi \: | \: \mathcal{F} \models \varphi \vdash \psi \}$.
    \item Let $\mathbb{F}$ be a class of Kripke frames, then $\operatorname{Log}(\mathbb{F}) = \bigcap \limits_{\mathcal{F} \in \mathbb{F}} \operatorname{Log}(\mathcal{F})$.
    \item Let $\Lambda$ be a substructural normal modal logic, then $\operatorname{Frames}(\mathcal{L}) = \{ \mathcal{F} \: | \: \mathcal{F} \models \mathcal{L} \}$ and $\mathcal{L}$ is \emph{complete} iff $\mathcal{L} = \operatorname{Log}(\operatorname{Frames}(\mathcal{L}))$.
  \end{enumerate}
\end{definition}
By ${\bf L}_{\bf K}$, we mean the minimal substructural distributive normal modal logic, the minimal set of sequents containing the axioms above and closed under the required inference rules.

The soundness theorem is the standard one.

\begin{theorem}
  Let $\mathbb{F}$ be a class of Kripke frames, then $\operatorname{Log}(\mathbb{F})$ is a substructural distributive normal modal logic.
\end{theorem}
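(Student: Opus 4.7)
The plan is to reduce to the single-frame case. Since $\operatorname{Log}(\mathbb{F}) = \bigcap_{\mathcal{F}\in\mathbb{F}}\operatorname{Log}(\mathcal{F})$ and the class of substructural distributive normal modal logics is closed under arbitrary intersections (each axiom belongs to each member, and the inference rules preserve membership componentwise), it is enough to show that $\operatorname{Log}(\mathcal{F})$ is such a logic for any single Kripke frame $\mathcal{F} = \langle W,\leq,R,R_\Box,R_\Diamond,\mathcal{O}\rangle$.

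The first key step is a \emph{persistence lemma}: for every model $\mathcal{M}$ over $\mathcal{F}$ and every formula $\varphi$, the set $\llbracket\varphi\rrbracket^{\mathcal{M}} = \{w : \mathcal{M},w\models\varphi\}$ is upwardly closed with respect to $\leq$. I would prove this by induction on $\varphi$: the atomic case holds because the valuation takes values in $\operatorname{Up}(W,\leq)$; $\top,\bot,\land,\lor$ are routine; the case $\mathbf{1}$ uses clause 6 (upward closure of $\mathcal{O}$); the cases $\bullet, \setminus, /$ use the monotonicity clauses in condition 3 (one of the three monotonicities is used for each connective, via the appropriate argument position in the ternary relation); finally $\Box\varphi$ and $\Diamond\varphi$ use the compatibility clauses of condition 7.

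The second step is to verify each axiom schema and inference rule directly on an arbitrary model $\mathcal{M}$. The propositional and lattice axioms, including distributivity, are straightforward from the pointwise Boolean combinations on $\operatorname{Up}(W,\leq)$. The residuation rules for $\setminus$ and $/$ translate directly into the relational clauses for product and residual. Associativity $(p\bullet q)\bullet r \dashv\vdash p\bullet(q\bullet r)$ is exactly what condition 2 expresses on the ternary relation. The unit laws $p\bullet\mathbf{1}\dashv\vdash\mathbf{1}\bullet p\dashv\vdash p$ use conditions 4 and 5, which together ensure that the elements of $\mathcal{O}$ act as two-sided units relative to $R$. The modal axioms $\Diamond(p\lor q)\vdash\Diamond p\lor\Diamond q$, $\Diamond\bot\vdash\bot$, $\Box p\land\Box q\vdash\Box(p\land q)$, $\top\vdash\Box\top$ are immediate from the $\forall/\exists$ clauses for $\Box,\Diamond$. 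The substitution rule is checked by routine induction, and the monotonicity rules for $\Box$ and $\Diamond$ follow from the semantic clauses.

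The main obstacle, and the only axiom that genuinely needs the complex interaction between the ternary relation and $R_\Box$, is $\Box p\bullet\Box q\vdash\Box(p\bullet q)$, which is precisely the content of frame condition 1. To verify it, I suppose $w\models\Box p\bullet\Box q$, so there exist $u,v$ with $Ruvw$, $u\models\Box p$, $v\models\Box q$. To show $w\models\Box(p\bullet q)$, take any $w'$ with $w R_\Box w'$; condition 1 provides $x,y$ with $Rxyw'$, $uR_\Box x$, $vR_\Box y$. Then $x\models p$ and $y\models q$, and so $w'\models p\bullet q$ by the clause for $\bullet$. The rest of the verification is essentially bookkeeping; the real payoff of the frame definition is concentrated in conditions 1, 2, 4, 5, each paired with exactly one of the less obvious axiom schemata.
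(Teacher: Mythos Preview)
Your proposal is correct and follows essentially the same route as the paper: the paper's own proof singles out only the one non-routine axiom $\Box p\bullet\Box q\vdash\Box(p\bullet q)$ and verifies it exactly as you do, via frame condition~1, leaving the remaining axioms and rules (and the persistence of truth sets) implicit as standard. Your write-up is simply a more complete version of the same argument.
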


\begin{proof} Let us show that $\mathcal{F} \models \Box p \bullet \Box q \Rightarrow \Box (p \bullet q)$, where $\mathcal{F} = \langle W, R, R_{\Box}, R_{\Diamond}, \mathcal{O}\rangle$ is an arbitrary frame. Let $\vartheta$ be a valuation and $\mathcal{M} = \langle \mathcal{F}, \vartheta \rangle$ a model such that $\mathcal{M}, w \models \Box p \bullet \Box q$, and $w \in W$.
Then there exists $u, v$ such that $R u v w$, $\mathcal{M}, u \models \Box p$, $\mathcal{M}, v \models \Box q$.
On the other hand, let $w' \in R_{\Box}(w)$. $R u v w \: \& \: w R_{\Box} w'$ implies $\exists x, y \in W \:\: R x y w' \: \& \: u R_{\Box} x \: \& \: v R_{\Box} y$. $\mathcal{M}, u \models \Box p$, $\mathcal{M}, v \models \Box q$ implies that for each $x \in R_{\Box}(u) \: \mathcal{M}, x \models p$ and for each $y \in R_{\Box}(v)$. $R x y w'$ implies $\mathcal{M}, w' \models p \bullet q$. Thus, $\mathcal{M}, w \models \Box(p \bullet q)$
\end{proof}

One may extend the notion of a bounded morphism for the relevant case to have homomorphisms between Kripke frames and models that preserve truth.

\begin{definition}
  Let $\mathcal{F}_1, \mathcal{F}_2$ be Kripke frames, a map $f : \mathcal{F}_1 \to \mathcal{F}_2$ is a bounded morphism, if:

  \begin{enumerate}
    \item $R_1 u v w \Rightarrow R_2 f(u) f(v) f(w)$
    \item $R_2 f(u) v' w' \Rightarrow \exists v, w \: v' \leq f(v) \: \& \: f(w) \leq w' \: \& \: R u v w$
    \item $R_2 u' f(v) w' \Rightarrow \exists u, w \: u' \leq f(u) \: \& \: f(w) \leq w' \: \& \: R u v w$
    \item $R_2 u' v' f(w) \Rightarrow \exists u, v \: u' \leq f(u) \: \& \: v' \leq f(v) \: \& \: R u v w$
    \item $f[(R_{\nabla})_1(x)] = (R_{\nabla})_2 (f(x))$, $\nabla = \Box, \Diamond$
    \item $f^{-1}[\mathcal{O}_2] = \mathcal{O}_1$
  \end{enumerate}
\end{definition}
By the notation $\mathcal{F}_1 \twoheadrightarrow \mathcal{F}_2$ we mean that there exists a surjective bounded morphism $f : \mathcal{F}_1 \twoheadrightarrow \mathcal{F}_2$ and we call such a map \emph{$p$-morphism}. Let $\mathcal{F}_1$, $\mathcal{F}_2$ be Kripke frames, $\vartheta_1$, $\vartheta_2$ valuations on $\mathcal{F}_1$ and $\mathcal{F}_2$ correspondingly, and $f : \mathcal{F}_1 \to \mathcal{F}_2$ a bounded morphism. Then $f : \mathcal{M}_1 \to \mathcal{M}_2$ is a bounded morphism of models, if
\begin{center}
  $\mathcal{M}_1, w \models p_i \Leftrightarrow \mathcal{M}_2, f(w) \models p_i$ for each propositional variable $p_i$.
\end{center}

The following lemma has the standard proof given in \cite{blackburn_rijke_venema_2001} \cite{chagrov1997modal}.

\begin{lemma}
$ $

  \begin{enumerate}
    \item $\mathcal{M}_1, w \models \varphi \Leftrightarrow \mathcal{M}_2, f(w) \models \varphi$.
    \item $\mathcal{F}_1 \twoheadrightarrow \mathcal{F}_2$ implies $\operatorname{Log}(\mathcal{F}_1) \subseteq \operatorname{Log}(\mathcal{F}_2)$.
    \item $\mathcal{F}_1 \cong \mathcal{F}_2$ implies $\operatorname{Log}(\mathcal{F}_1) = \operatorname{Log}(\mathcal{F}_2)$.
  \end{enumerate}
\end{lemma}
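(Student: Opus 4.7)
The plan is to prove part (1) by induction on the structure of $\varphi$, with (2) following by pulling valuations back along the surjection and (3) reducing to (2) applied in both directions of the isomorphism.

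Before starting the induction I would first record an \emph{upward persistence lemma}: in any model $\mathcal{M}$, the set $\{w : \mathcal{M}, w \models \chi\}$ is $\leq$-upward closed for every formula $\chi$. This is a routine induction on $\chi$ using frame clause (3) to handle $\bullet, \setminus, /$ (each $\leq$-shift of an $R$-triple can be absorbed), clause (6) for $\mathbf{1}$, and clause (7) for $\Box$ and $\Diamond$. Persistence is needed later to close the gaps that the back conditions of a bounded morphism leave.

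For the inductive proof of (1), the propositional-variable case is the defining property of a bounded morphism of models; the cases of $\top, \bot$ are trivial, and $\mathbf{1}$ uses clause (6) of the bounded-morphism definition, $f^{-1}[\mathcal{O}_2] = \mathcal{O}_1$. The cases $\land, \lor$ are immediate from the induction hypothesis. For $\bullet$, the forward direction transports witnesses $u, v$ via clause (1), and the backward direction uses clause (4) to pull back a decomposition $R_2 u' v' f(w)$ to a triple $R_1 u v w$ with $u' \leq f(u)$ and $v' \leq f(v)$; upward persistence then lifts truth of $\varphi, \psi$ from $u', v'$ to $f(u), f(v)$, whence the induction hypothesis applies. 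For $\setminus$ the interesting direction is the forward one: given $R_2 u' f(w) v'$ with $\mathcal{M}_2, u' \models \varphi$, I apply clause (3) to obtain $R_1 u w v$ with $u' \leq f(u), f(v) \leq v'$, use persistence to get $\mathcal{M}_2, f(u) \models \varphi$, invoke the induction hypothesis twice to conclude $\mathcal{M}_1, v \models \psi$, and then use persistence once more to push this up to $v'$; the backward direction just uses clause (1) to send a given $R_1 u w v$ into $\mathcal{F}_2$. The case of $/$ is symmetric, using clause (2) in place of (3). For $\Box$ and $\Diamond$ clause (5), $f[(R_\nabla)_1(x)] = (R_\nabla)_2(f(x))$, is a strong back-and-forth that closes both directions immediately by the induction hypothesis.

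For part (2), given a surjective bounded morphism $f$ and a valuation $\vartheta_2$ on $\mathcal{F}_2$, I would set $\vartheta_1(p) := f^{-1}[\vartheta_2(p)]$. The key sublemma is that $f$ is $\leq$-monotone: if $w \leq w'$ in $\mathcal{F}_1$, then by frame clause (5) there is $o \in \mathcal{O}_1$ with $R_1 w o w'$; clause (1) of bounded morphism gives $R_2 f(w) f(o) f(w')$, clause (6) gives $f(o) \in \mathcal{O}_2$, and clause (5) of the frame definition applied in $\mathcal{F}_2$ delivers $f(w) \leq f(w')$. Hence $\vartheta_1(p)$ is upward closed, so it is a legitimate valuation, and (1) gives that truth is preserved and reflected along $f$. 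Surjectivity of $f$ then transfers validity of $\varphi \vdash \psi$ from $\mathcal{F}_1$ to $\mathcal{F}_2$. Part (3) is immediate: an isomorphism is a surjective bounded morphism whose inverse is also one, so (2) gives both inclusions. The only step that takes real care will be the residual cases $\setminus, /$ and the backward case of $\bullet$, where the back-and-forth conditions deliver only inequalities $u' \leq f(u), v' \leq f(v), f(w) \leq w'$ rather than equalities, and the gaps must be absorbed by upward persistence.
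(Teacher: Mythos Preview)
Your proposal is correct and is precisely the standard structural-induction argument (with upward persistence to absorb the inequalities in the back conditions, and valuation pullback for part (2)) that the paper defers to the textbooks of Blackburn--de Rijke--Venema and Chagrov--Zakharyaschev rather than spelling out. Your care in deriving monotonicity of $f$ from frame clause~(5) and bounded-morphism clauses~(1) and~(6), so that the pulled-back valuation lands in $\operatorname{Up}(W_1,\leq)$, is a detail the cited references would not cover for this particular frame class, and it is handled correctly.
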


\section{Residuated distributive modal algerbas}

In this section, we study algebraic semantics and canonical extensions for substructural distributive modal logic. The required lattice-theoretic and canonical extensions definitions and notations are explained in the appendix. Let us define a residuated lattice \cite{Jipsen}.

\begin{definition}
  A residuated lattice is an algebra $\mathcal{R} = \langle \mathcal{L}, \cdot, \setminus, /, \varepsilon \rangle$, where $\mathcal{L}$ is a bounded lattice, $\cdot$ is a binary associative monotone operation,
  $\varepsilon$ is a multiplicative identity, $\setminus$ and $/$ are residuals, that is, the following equivalence holds for all $a, b, c \in \mathcal{L}$:

  \begin{center}
    $b \leq a \setminus c \Leftrightarrow a \cdot b \leq c \Leftrightarrow a \leq c / b$
  \end{center}
\end{definition}

Note that the class of all residuated lattices forms a variety since the quasi-identities above might be equivalently reformulated as identities, see \cite[Lemma 2.3]{Jipsen}.

A residuated lattice is called bounded distributive if its lattice reduct is a bounded distributive lattice. A residuated lattice morphism is a map $f : \mathcal{L}_1 \to \mathcal{L}_2$ that commutes with all operations in a usual way.

Let us recall the essential facts about (prime) filters on bounded distributive residuated lattices, see \cite{galatos2003varieties} \cite{Urq}. As a matter of fact, these statements hold for an arbitrary distributive lattice ordered semigroup since those properties of filters and their products do not depend on residuals.

\begin{lemma}\label{filter}
  Let $\mathcal{L}$ be a bounded distributive residuated lattice. Let $X, Y \subseteq \mathcal{L}$ and
  $X \cdot Y = \{ x \cdot y \: | \: x \in X, y \in Y \}$. Let us define $X \bullet Y = \uparrow X \cdot Y$, then
  \begin{enumerate}
    \item If $Z \subseteq \mathcal{L}$ is a filter, then $X \cdot Y \subseteq Z$ iff $X \bullet Y \subseteq Z$.
    \item If $X, Y \subseteq \mathcal{L}$ are filters, then $X \bullet Y$ is a filter.
    \item Let $X, Y, Z$ be filters in $\mathcal{L}$ and $Z$ is prime such that $XY \subseteq Z$, then there exist prime filters $X', Y'$ such that $X \subseteq X'$, $Y \subseteq Y'$, and $X' Y' \subseteq Z$.
  \end{enumerate}
\end{lemma}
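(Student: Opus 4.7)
The plan is to handle the three parts in order, with the bulk of the work concentrated in part (3).

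For (1), I would just unfold definitions: since $Z$ is a filter it is upward closed, so $X\cdot Y\subseteq Z$ immediately gives ${\uparrow}(X\cdot Y)\subseteq Z$, and the converse is trivial. For (2), I would check the three filter conditions on $X\bullet Y={\uparrow}(X\cdot Y)$. Upward closure is built into the definition, and $\top\in X\bullet Y$ since $X,Y$ are nonempty and every element lies below $\top$. For closure under finite meets, given $a\geq x_1\cdot y_1$ and $b\geq x_2\cdot y_2$ with $x_i\in X,\,y_i\in Y$, I would take $x=x_1\wedge x_2\in X$ and $y=y_1\wedge y_2\in Y$ and use monotonicity of $\cdot$ to get $x\cdot y\leq a\wedge b$, placing $a\wedge b$ in ${\uparrow}(X\cdot Y)$.

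For (3) the plan is a standard Zorn-plus-distributivity argument, carried out twice. First, fix $Y$ and $Z$ and consider the poset of all filters $F\supseteq X$ with $F\cdot Y\subseteq Z$, ordered by inclusion; chains have filter unions (using part~(1) to see the union still multiplies into $Z$), so by Zorn there is a maximal such $X'$. I would then argue $X'$ is prime: suppose $a\vee b\in X'$ but $a,b\notin X'$, and let $X'\vee a$ denote the filter generated by $X'\cup\{a\}$, which in a distributive lattice has the explicit form $\{t:t\geq x\wedge a\text{ for some }x\in X'\}$. Maximality of $X'$ forces some $(x_1\wedge a)\cdot y_1\notin Z$ and symmetrically $(x_2\wedge b)\cdot y_2\notin Z$. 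Setting $x:=x_1\wedge x_2\in X'$ and $y:=y_1\wedge y_2\in Y$, upward closure of $Z$ transports both failures to the single pair $(x,y)$, giving $(x\wedge a)\cdot y\notin Z$ and $(x\wedge b)\cdot y\notin Z$. Then by lattice distributivity and the fact that $\cdot$ distributes over $\vee$ in each argument (a consequence of residuation, since $a\cdot(-)$ and $(-)\cdot a$ are left adjoints), I get
\[
(x\wedge(a\vee b))\cdot y \;=\; \bigl((x\wedge a)\cdot y\bigr)\vee\bigl((x\wedge b)\cdot y\bigr).
\]
The left side lies in $X'\cdot Y\subseteq Z$, so by primality of $Z$ one of the disjuncts must lie in $Z$, contradicting the choice of $x,y$. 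Having fixed such a prime $X'$, I would run exactly the same Zorn-and-primality argument in the second coordinate, extending $Y$ to a maximal filter $Y'\supseteq Y$ with $X'\cdot Y'\subseteq Z$, and verify primality of $Y'$ by the mirror calculation using $\cdot$'s distributivity over $\vee$ in its second argument.

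The substantive step — and the one I expect to be the main obstacle — is the primality argument, since it relies simultaneously on three ingredients that must fit together: the explicit description of the filter $F\vee a$ available only in the distributive setting, the upward-closure of $Z$ used to reduce the two failing pairs $(x_i,y_i)$ to a common $(x,y)$, and the residuation-driven distributivity $(u\vee v)\cdot w = (u\cdot w)\vee(v\cdot w)$, which is what finally lets primality of $Z$ bite. The rest is bookkeeping around Zorn's lemma.
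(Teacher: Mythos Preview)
Your proof is correct and follows the standard route. The paper does not give its own proof of this lemma at all: it simply states the result and cites Galatos' thesis and Urquhart as sources, remarking that these facts ``hold for an arbitrary distributive lattice ordered semigroup since those properties of filters and their products do not depend on residuals.'' Your argument is precisely the classical one found in those references: parts (1) and (2) are immediate from the definitions and monotonicity of the product, and part (3) is the expected two-stage Zorn argument where primality of the maximal extension is forced by combining lattice distributivity $x\wedge(a\vee b)=(x\wedge a)\vee(x\wedge b)$, the join-distributivity of $\cdot$ coming from residuation, and primality of $Z$. One minor remark: the explicit description of the generated filter $X'\vee a=\{t:t\geq x\wedge a,\ x\in X'\}$ does not actually need distributivity of the lattice (it holds in any lattice since $X'$ is already closed under finite meets); distributivity enters only at the step $x\wedge(a\vee b)=(x\wedge a)\vee(x\wedge b)$, exactly where you use it.
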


A residuated distributive modal algebra is a bounded distributive residuated lattice extended with the operators $\Box$ and $\Diamond$ that distribute over finite infima and suprema respectively. One may also consider such algebras as full Lambek algebras \cite{ono1993semantics} \cite{Ono2019} reducts of which are bounded distributive lattices. Here, modalities are merely the ${\bf K}$-like operators without any additional requirements except the connection between $\Box$ and $\cdot$. We require that $\Box$ is also ``weakly normal'' with respect to the product. Such a ``weak normality'' corresponds to the promotion principle which is widespread in linear logic. This principle often has the ${\bf K}4$ form, where formulas in the premise are boxed from the left. This version of the promotion rule is rather the ${\bf K}$-rule than the ${\bf K}4$ one:

\begin{prooftree}
\AxiomC{$\varphi_1 \bullet \dots \bullet \varphi_n \vdash \psi$}
\UnaryInfC{$\Box \varphi_1 \bullet \dots \bullet \Box \varphi_n \vdash \Box \psi$}
\end{prooftree}

The inference rule also allows one to obtain the Kripke axioms formulated in terms of residuals as, e.g., $\Box(\varphi \setminus \psi) \Rightarrow \Box \varphi \setminus \Box \psi$ and $\Box (\psi / \varphi) \Rightarrow \Box \psi / \Box \varphi$.

This ``weak normality'' requirement is introduced as the additional inequation, more precisely:
\begin{definition}
  A residuated distributive modal algebra (RDMA) is an algebra $\mathcal{M} = \langle \mathcal{R}, \Box, \Diamond \rangle$ with the following conditions for each $a, b \in \mathcal{R}$:

  \begin{enumerate}
    \item $\Box (a \land b) = \Box a \land \Box b$, $\Box \top = \top$
    \item $\Diamond (a \lor b) = \Diamond a \lor \Diamond b$, $\Diamond \bot = \bot$
    \item $\Box a \cdot \Box b \leq \Box (a \cdot b)$
  \end{enumerate}

  An RDMA homomorphism is a bounded distributive residuated lattice homomorphism $f : \mathcal{M}_1 \to \mathcal{M}_2$ such that $f(\Box a) = \Box (f(a))$ and $f(\Diamond a) = \Diamond (f (a))$.
\end{definition}

One may associate with an arbitrary substructural normal modal logic its variety of RDMAs as follows:

\begin{definition}
  Let $\Lambda$ be a substructural normal modal logic, $\mathcal{V}_{\Lambda}$ is a variety defined by the set of inequations $\{ \varphi \leq \psi \: | \: \Lambda \vdash \varphi \vdash \psi \}$.
\end{definition}
Note that $\varphi$, $\psi$ are terms of the signature $\langle \wedge, \vee, \bot, \top, \cdot, \setminus, /, \varepsilon, \Box, \Diamond \rangle$ in such inequations as $\varphi \leq \psi$. One has an algebraic completeness for each substructural distributive normal modal logic as usual.

\begin{theorem}\label{linden}
  Let $\Lambda$ be a substructural normal modal logic, then there exists an RDMA $\mathcal{R}_{\Lambda}$ such that $\varphi \vdash \psi \in \Lambda$ iff $\mathcal{R}_{\Lambda} \models \varphi \leq \psi$.
\end{theorem}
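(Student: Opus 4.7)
The plan is the standard Lindenbaum--Tarski construction relativised to $\Lambda$. I define the binary relation $\sim$ on the set $\mathrm{Fm}$ of formulas by $\varphi \sim \psi$ iff both $\varphi \vdash \psi$ and $\psi \vdash \varphi$ belong to $\Lambda$. Reflexivity comes from the axiom $p \vdash p$ together with the substitution rule, symmetry is built into the definition, and transitivity is the cut rule. The first technical task is to verify that $\sim$ is a congruence with respect to every connective in the signature. For $\Box$ and $\Diamond$ this is immediate from the two explicit monotonicity rules. For $\bullet$, monotonicity in each argument is derived from the residuation rule: instantiating $\varphi' \bullet \psi' \vdash \varphi' \bullet \psi'$, applying residuation to move $\psi'$ to the right, composing with $\psi \vdash \psi'$ and then reversing residuation yields $\varphi' \bullet \psi \vdash \varphi' \bullet \psi'$, and similarly on the other factor. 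Congruence of $\setminus$ and $/$ then follows by a routine argument using residuation together with the monotonicity of $\bullet$ just established; congruence of $\wedge$ and $\vee$ uses the explicit introduction rules for these connectives together with the lattice axioms.

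Next I form the quotient $\mathcal{R}_{\Lambda} := \mathrm{Fm}/{\sim}$ and equip it with operations $[\varphi] \star [\psi] := [\varphi \star \psi]$ for each connective $\star$ (well-defined by the congruence step), constants $[\bot], [\top], \mathbf{1} := [{\bf 1}]$, and the partial order $[\varphi] \leq [\psi]$ iff $\varphi \vdash \psi \in \Lambda$. I then check that $\mathcal{R}_{\Lambda}$ is an RDMA: the lattice axioms and distributivity come directly from the corresponding sequent axioms; boundedness is witnessed by $[\bot]$ and $[\top]$; associativity and unit laws for $\cdot$ come from the $\dashv\vdash$-axioms for $\bullet$ and ${\bf 1}$; the residuation equivalence $[\psi] \leq [\varphi] \setminus [\theta] \iff [\varphi] \cdot [\psi] \leq [\theta] \iff [\varphi] \leq [\theta]/[\psi]$ is precisely the content of the double-line residuation rules; the $\mathbf{K}$-axioms for $\Box$ and $\Diamond$ are listed as axioms of $\Lambda$; and the weak normality $\Box a \cdot \Box b \leq \Box(a \cdot b)$ is exactly the axiom $\Box p \bullet \Box q \vdash \Box(p \bullet q)$.

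Finally, I establish the correspondence. Define the canonical valuation $v_{\Lambda}: \mathrm{PV} \to \mathcal{R}_{\Lambda}$ by $v_{\Lambda}(p) := [p]$. A straightforward formula induction, using the inductive definition of each operation on equivalence classes, shows that $\overline{v_{\Lambda}}(\varphi) = [\varphi]$ for every formula $\varphi$. Hence $\mathcal{R}_{\Lambda} \models \varphi \leq \psi$ under $v_{\Lambda}$ iff $[\varphi] \leq [\psi]$ iff $\varphi \vdash \psi \in \Lambda$. For the converse direction, since $\Lambda$ is closed under substitution, $\varphi \vdash \psi \in \Lambda$ implies that $\overline{v}(\varphi) \leq \overline{v}(\psi)$ holds under \emph{every} valuation $v$ on $\mathcal{R}_{\Lambda}$, because every such valuation is of the form induced by some substitution composed with $v_{\Lambda}$. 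This gives the required biconditional.

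The only part requiring genuine care is the congruence verification for the residuals and the product, since the calculus only lists bidirectional residuation plus explicit monotonicity for the modalities; the rest of the argument is the standard algebraisation pattern and poses no real obstacle once congruence is in hand.
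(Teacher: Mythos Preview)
Your proposal is correct and follows exactly the approach the paper intends: the paper does not give a proof at all but simply remarks that ``one such RDMA is the free countably generated algebra in the variety $\mathcal{V}_{\Lambda}$, the Lindenbaum--Tarski algebra up to isomorphism.'' Your write-up is a careful unpacking of precisely this standard construction, including the congruence verifications and the use of substitution closure to pass from the canonical valuation to arbitrary valuations, all of which the paper leaves implicit.
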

One such RDMA is the free countably generated algebra in the variety $\mathcal{V}_{\Lambda}$, the Lindenbaum-Tarski algebra up to isomorphism.

The following statement also holds according to the general technique, see \cite{GOLDBLATT1989173}:

\begin{lemma}
  Let $\mathcal{\Lambda}$ be a residuated distributive normal modal logic, then the map $\Lambda \mapsto \mathcal{V}_{\Lambda}$ is the isomorphism between the lattice of residuated distributive normal modal logics and the lattice of varieties of RDMAs.
\end{lemma}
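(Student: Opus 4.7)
The plan is to construct an explicit inverse map and verify monotonicity together with the two composition identities, so that the correspondence becomes a (dual) lattice isomorphism. Define $\Phi : \mathcal{V} \mapsto \Lambda_{\mathcal{V}}$ by
\[
\Lambda_{\mathcal{V}} := \{\, \varphi \vdash \psi \;:\; \mathcal{V} \models \varphi \leq \psi \,\}.
\]
The first step is to check that $\Lambda_{\mathcal{V}}$ is genuinely a substructural normal distributive modal logic: each schematic axiom of the calculus corresponds by construction of RDMA to an inequation holding in every RDMA, hence in $\mathcal{V}$; and each inference rule (cut, residuation, $\Box$- and $\Diamond$-monotonicity, substitution) preserves validity of inequations in a variety, because varieties are closed under subalgebras and homomorphic images, and in particular under renaming of generators of the free algebra. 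Symmetrically, $\mathcal{V}_{\Lambda}$ is a variety by construction, since its defining class is given by a set of equations in the RDMA signature (the two inclusions $\varphi \leq \psi$ and $\psi \leq \varphi$ amount to the equation $\varphi \wedge \psi = \varphi$).

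Next I would observe monotonicity: $\Lambda \subseteq \Lambda'$ yields more defining inequations for $\mathcal{V}_{\Lambda'}$, hence $\mathcal{V}_{\Lambda'} \subseteq \mathcal{V}_{\Lambda}$; dually $\mathcal{V} \subseteq \mathcal{V}'$ gives $\Lambda_{\mathcal{V}'} \subseteq \Lambda_{\mathcal{V}}$. Thus the correspondence is a dual lattice isomorphism between the two inclusion-ordered lattices, which one may rephrase as an isomorphism by reversing the order on one side. The heart of the proof is the two composition identities. For $\Lambda_{\mathcal{V}_{\Lambda}} = \Lambda$, I invoke Theorem \ref{linden}: the Lindenbaum--Tarski algebra $\mathcal{R}_{\Lambda}$ belongs to $\mathcal{V}_{\Lambda}$ and satisfies $\varphi \leq \psi$ exactly when $\Lambda \vdash \varphi \vdash \psi$. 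Hence if $\mathcal{V}_{\Lambda} \models \varphi \leq \psi$ then in particular $\mathcal{R}_{\Lambda} \models \varphi \leq \psi$, which gives $\varphi \vdash \psi \in \Lambda$; the converse inclusion is by definition of $\mathcal{V}_{\Lambda}$.

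For $\mathcal{V}_{\Lambda_{\mathcal{V}}} = \mathcal{V}$ the inclusion $\mathcal{V} \subseteq \mathcal{V}_{\Lambda_{\mathcal{V}}}$ is immediate from the definition of $\Lambda_{\mathcal{V}}$. The reverse inclusion appeals to Birkhoff's HSP theorem: any variety is determined by its equational theory, so an RDMA that satisfies every inequation valid in $\mathcal{V}$ must lie in $\mathcal{V}$ itself. I expect the main subtlety to be bookkeeping rather than conceptual difficulty: one must check that the deductive closure $\Lambda_{\mathcal{V}}$ is closed under uniform substitution (which is the content of the assertion that $\mathcal{V}$ satisfies inequations on all valuations, equivalently on the free algebra), and one must verify that lattice operations on both sides match under $\Lambda \mapsto \mathcal{V}_{\Lambda}$ (meets going to intersections and joins to the variety, resp.\ logic, generated by the union). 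Both facts follow straightforwardly from the first step together with Birkhoff, so the overall argument is a compact application of the general Goldblatt correspondence \cite{GOLDBLATT1989173} transported to the present signature.
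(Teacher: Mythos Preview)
Your proposal is correct and is precisely an instantiation of the general Goldblatt correspondence that the paper invokes; note that the paper gives no proof of its own here, but merely cites \cite{GOLDBLATT1989173}, so your explicit construction of the inverse map together with the appeal to Theorem~\ref{linden} and Birkhoff's HSP theorem is exactly what that reference would unpack to in the present signature. The only cosmetic point is that the map is order-reversing (larger logics correspond to smaller varieties), which you correctly flag; the paper's phrasing ``isomorphism'' tacitly assumes one of the two lattices is taken with the reverse order.
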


We define a completely distributive residuated perfect lattice as a distributive version of a residuated perfect one defined in \cite{dunn2005}.

\begin{definition}
  A distributive residuated lattice
  $\mathcal{L} = \langle L, \bigvee, \bigwedge, \cdot, \setminus, /, \varepsilon \rangle$ is called perfect distributive residuated lattice, if:

  \begin{itemize}
    \item Its lattice reduct is perfect distributive, see Definition~\ref{cdlattice}.
    \item $\cdot$, $\setminus$, and $/$ are binary operations on $L$ such that $/$ and $\setminus$ right and left residuals of $\cdot$, repsectively; $\cdot$ is a complete operator on $\mathcal{L}$, and $/ : \mathcal{L} \times \mathcal{L}^{\delta} \to \mathcal{L}$, $\setminus : \mathcal{L}^{\delta} \times \mathcal{L} \to \mathcal{L}$ are complete dual operators, where $\mathcal{L}^{\delta}$ is the dual of $\mathcal{L}$.
  \end{itemize}
\end{definition}

Here we formulate canonical extensions for bounded distributive lattices with a residuated family in the fashion of \cite{Gehrke2014}, where the author introduces canonical extensions for Heyting algebras. Here we take a generalised version of that construction formulated for residuated lattices, see \cite{gehrketopological}.

\begin{lemma}
  Let $\mathcal{L} = \langle L, \cdot, \setminus, /, \varepsilon \rangle$ be a bounded distributive residuated lattice, then $\mathcal{L}^{\sigma} = \langle L^{\sigma}, \cdot^{\sigma}, \setminus^{\pi}, /^{\pi}, \varepsilon \rangle$ is a perfect distributive residuated lattice.
\end{lemma}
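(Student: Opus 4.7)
The plan is to invoke the result recalled in the appendix that the canonical extension of the lattice reduct of $\mathcal{L}$ is already a perfect distributive lattice, and then install the residuated structure on $L^{\sigma}$ by extending $\cdot$, $\setminus$, and $/$ via the $\sigma$- and $\pi$-extension machinery of Gehrke and J\'{o}nsson, checking that the result satisfies the clauses of the definition. Since $\cdot$ is order-preserving in both arguments, the $\sigma$-extension $\cdot^{\sigma}$ is automatically a complete operator on $L^{\sigma}$ (it preserves arbitrary joins in each coordinate). Symmetrically, $\setminus$ is antitone in its first argument and monotone in its second, while $/$ is monotone in its first and antitone in its second; the $\pi$-extensions of these mixed-variance maps yield complete dual operators $\setminus^{\pi}\colon (L^{\sigma})^{\delta} \times L^{\sigma} \to L^{\sigma}$ and $/^{\pi}\colon L^{\sigma} \times (L^{\sigma})^{\delta} \to L^{\sigma}$ of exactly the type required by the definition of a perfect distributive residuated lattice.

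The substantive step is to verify that $\cdot^{\sigma}$, $\setminus^{\pi}$, $/^{\pi}$ still satisfy residuation in $L^{\sigma}$, that is, $b \leq a \setminus^{\pi} c \iff a \cdot^{\sigma} b \leq c \iff a \leq c /^{\pi} b$ for all $a, b, c \in L^{\sigma}$. The route is to first verify the equivalences when $a, b, c$ range over filter and ideal elements of $L^{\sigma}$; on such elements the extensions reduce to directed joins or meets of values taken in $\mathcal{L}$ itself, and the residuation law of $\mathcal{L}$ propagates by a direct calculation together with the closure properties of filter products recorded in Lemma~\ref{filter}. The general case then follows by density and compactness of the canonical extension, using that every element of $L^{\sigma}$ is a join of filter elements and a meet of ideal elements.

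The main obstacle is the well-known mismatch between $\sigma$- and $\pi$-extensions: in general neither extension commutes with compositions of mixed-variance operations, so the residuation identities do not transport automatically. A convenient workaround is to define the genuine right residual $a \Rightarrow c := \bigvee\{b \in L^{\sigma} : a \cdot^{\sigma} b \leq c\}$, which is well-defined because $\cdot^{\sigma}$ is a complete operator on a complete lattice, and then prove $a \Rightarrow c = a \setminus^{\pi} c$ (with the analogous statement on the other side for $/^{\pi}$). This reduces to showing that the two expressions agree when $a$ is a filter element and $c$ a closed element, where both values admit an explicit description in terms of the original $\setminus$ on $\mathcal{L}$; density then lifts the agreement to all of $L^{\sigma}$. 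This is essentially the technique developed in \cite{gehrketopological}, which is what we are invoking.
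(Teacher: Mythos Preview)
Your proposal is correct and follows essentially the same route as the paper: the paper does not give a full proof either, but simply cites \cite{gehrketopological}, writes out the explicit formulas for $\cdot^{\sigma}$, $\setminus^{\pi}$, $/^{\pi}$ first on filter/ideal elements and then on arbitrary elements, and remarks that residuation follows from the meet-density of $\mathcal{F}(\mathcal{L}^{\sigma})$ and join-density of $\mathcal{I}(\mathcal{L}^{\sigma})$ in $\mathcal{L}^{\sigma}$. Your sketch is in fact more detailed than the paper's own treatment; the only slip is terminological---in the paper ``closed'' means \emph{filter} element, so where you write ``$c$ a closed element'' you presumably intend $c$ to be an \emph{ideal} (open) element.
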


Instead of proof that repeats this one \cite{gehrketopological}, we just define $\cdot^{\sigma}$, $\setminus^{\pi}$, and $/^{\pi}$ explicitly. Here we note that the canonical extension of a lattice reduct is a perfect distributive lattice \cite{gehrke1994bounded}.

Let $a, a' \in \mathcal{F}(\mathcal{L}^{\sigma})$ and $b \in \mathcal{F}(\mathcal{L}^{\sigma})$, then

\begin{enumerate}
  \item $a \setminus^{\pi} b = \bigvee \{ x \setminus y \: | \: a \leq x \in \mathcal{L} \ni y \leq b\}$ and similarly for the right residual
  \item $a \cdot^{\sigma} a' = \bigwedge \{ x \cdot x' \: | \: a \leq x \in \mathcal{L} \: \& \: a \leq x' \in \mathcal{L} \}$
\end{enumerate}

Let $a, b \in \mathcal{L}^{\sigma}$, then.
\begin{enumerate}
  \item $a \cdot^{\sigma} b =
  \bigvee \{ x \cdot^{\sigma} y \: | \: a \geq x \in \mathcal{F}(\mathcal{L}^{\sigma}) \: \& \: b \geq y \in \mathcal{F}(\mathcal{L}^{\sigma}) \}$
  \item $a \setminus^{\pi} b = \bigwedge \{ x \setminus^{\pi} y \: | \: a \geq x \in \mathcal{F}(\mathcal{L}^{\sigma}) \: \& \: b \leq y \in \mathcal{I}(\mathcal{L}^{\sigma})\}$ and $b /^{\pi} a$ is defined similarly
\end{enumerate}
The residuation property follows from the meet-density of $\mathcal{F}(\mathcal{L}^{\sigma})$ and join-density of $\mathcal{I}(\mathcal{L}^{\sigma})$ in $\mathcal{L}^{\sigma}$. Hence $\mathcal{L}^{\sigma}$ is a perfect distributive residuated lattice.

Let us describe the discrete duality for perfect distributive residuated lattices. Here we concretise the construction that establishes the discrete duality between perfect residuated lattices and perfect posets with ternary relation in \cite{dunn2005} within a distributive setting. We piggyback the Raney representation of perfect distributive lattices as algebras of downsets of completely join-irreducible elements \cite{raney1952completely} that generalise Birkhoff representation for finite lattices \cite{birkhoff1937rings}. We just recall that any perfect distributive lattice $\mathcal{L}$ is isomorphic to the lattice $\operatorname{Down}(\mathcal{J}^{\infty}(\mathcal{L}))$ mapping $a \in \mathcal{L}$ to $\mathcal{J}^{\infty}(\mathcal{L}) \cap \downarrow a$.

This representation might be extended to the duality between the categories of perfect distributive lattices and posets.

Let $\mathcal{L}$ be a perfect distributive residuated lattice. We define the relation $R \subseteq \mathcal{J}^{\infty}(\mathcal{L}) \times \mathcal{J}^{\infty}(\mathcal{L}) \times \mathcal{J}^{\infty}(\mathcal{L})$ as $R a b c \Leftrightarrow a \cdot b \leq c$.
Let us put $\mathcal{O} = \uparrow \varepsilon$, where $\varepsilon$ is a multiplicative identity. The structure $\mathcal{L}_{+} = \langle \mathcal{J}^{\infty}(\mathcal{L}), \leq, R, \mathcal{O} \rangle$ is the \emph{dual frame} of a perfect distributive residuated lattice $\mathcal{L}$.

Let $\langle W, \leq \rangle$ be a poset and $R \subseteq W^{3}$, $\mathcal{O}$ with the conditions (ii)-(vi) from Definition~\ref{frame}. Let us define the following operations on $\operatorname{Up}(W, \leq)$:
\begin{itemize}
  \item $A \setminus B = \{ w \in W \: | \: \forall u, v \in W \: R u w v \: \& \: u \in A \Rightarrow v \in B \}$
  \item $B / A = \{ w \in W \: | \: \forall u, v \in W \: R w u v \: v \in A \Rightarrow v \in B \}$
  \item $A \cdot B = \{ w \in W \: | \: \exists u, v \in W \: R u v w \: \& \: u \in A \: \& \: v \in B \}$
\end{itemize}

These operations are clearly well defined. Let us check $A \setminus B \in \operatorname{Up}(W, \leq)$, if $A, B \in \operatorname{Up}(W, \leq)$. Let $x \in A \setminus B$ and $x \leq y$, then $A \cdot \{ x \}\subseteq B$. Let $w \in \{ y \} \cdot A$, then there exists $a \in A$ such that $R y a w$, then $R x a w$ by the item (iii), Definition~\ref{frame}. Thus, $\{ y \} \cdot A \subseteq B$. Similarly, $B / A \in \operatorname{Up}(W, \leq)$. The residuation property for these operations holds immediately.

The following theorem establishes the discrete duality between perfect distributive residuated algebras and posets with a ternary relation that encodes the product. Let us call such a poset with the relation as above a \emph{ternary Kripke frame}.

\begin{theorem}\label{restheorem}
  $ $

  \begin{enumerate}
  \item Let $\mathcal{R}$ be a perfect distributive residuated lattice, then $\mathcal{R} \cong (\mathcal{R}_{+})^{+}$.
  \item Let $\mathcal{F}$ be a ternary Kripke frame, then $\mathcal{F} \cong (\mathcal{F}^{+})_{+}$.
\end{enumerate}
\end{theorem}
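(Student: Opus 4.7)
My strategy will be to lift the Raney-Birkhoff representation of perfect distributive lattices (recalled in the appendix) to the full residuated signature, so that (i) and (ii) become the two sides of a single object-level duality.

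For part (i), I will begin with the lattice-reduct isomorphism $\eta : \mathcal{R} \to (\mathcal{R}_+)^+$ sending $a \mapsto \{j \in \mathcal{J}^{\infty}(\mathcal{R}) : j \leq a\}$ and verify that $\eta$ also respects $\cdot$, $\setminus$, $/$, and $\varepsilon$. For $\cdot$, I will use that $\cdot$ is a complete operator on the perfect distributive residuated lattice $\mathcal{R}$ and that the elements of $\mathcal{J}^{\infty}$ are completely join-prime (by distributivity) to derive
\[
a \cdot b = \bigvee \bigl\{j_1 \cdot j_2 : j_1, j_2 \in \mathcal{J}^{\infty}(\mathcal{R}),\ j_1 \leq a,\ j_2 \leq b\bigr\};
\]
unfolding $R j_1 j_2 j \iff j_1 \cdot j_2 \leq j$ will then give the set-theoretic identity that matches the frame-side definition of the product. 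For $\setminus$ and $/$, I will invoke the fact that they are complete dual operators, so the residuation biconditional $b \leq a \setminus c \iff a \cdot b \leq c$ transports directly to the frame-level residuals defined on upsets. Preservation of $\varepsilon$ will be immediate from $\mathcal{O} = \uparrow\varepsilon \cap \mathcal{J}^{\infty}(\mathcal{R})$ together with condition (v) of Definition~\ref{frame}.

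For part (ii), I will first check that $\mathcal{F}^+ = \operatorname{Up}(W,\leq)$, equipped with the operations defined just before the theorem, is a perfect distributive residuated lattice: the lattice reduct is the standard perfect distributive lattice of upsets; the existential definition of $\cdot$ gives complete additivity; the universal definitions of $\setminus$ and $/$ yield complete dual operators; residuation is built into the definitions; and monotonicity condition (iii) keeps all three operations inside $\operatorname{Up}(W,\leq)$. I will then describe $(\mathcal{F}^+)_+$ concretely: the completely join-irreducible upsets of $(W,\leq)$ are precisely the principal upsets $\uparrow w$, and $w \mapsto \uparrow w$ furnishes an order-isomorphism between $(W,\leq)$ and $\mathcal{J}^{\infty}(\mathcal{F}^+)$ up to the standard order-reversal that comes with the Raney map. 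The central remaining calculation is
\[
R_{(\mathcal{F}^+)_+}(\uparrow u)(\uparrow v)(\uparrow w) \iff \uparrow u \cdot \uparrow v \supseteq \uparrow w \iff R u v w,
\]
where the backward direction is immediate and the forward direction collapses the existential witnesses back to $u$ and $v$ precisely because condition (iii) makes $R$ monotone in each argument. The agreement of the distinguished subset $\mathcal{O}$ on both sides will follow from conditions (iv)-(vi), which force $\mathcal{O}$ to behave as the multiplicative unit of $\mathcal{F}^+$, so that $\uparrow\varepsilon_{\mathcal{F}^+} \cap \mathcal{J}^{\infty}(\mathcal{F}^+)$ pulls back under $w \mapsto \uparrow w$ to the original $\mathcal{O}$.

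The hardest part will be the bookkeeping around the complete dual operators $\setminus$ and $/$ in part (i): each is a complete operator only relative to the order-dual lattice in one argument, so the reduction to join-irreducibles must be performed on the correct side in each argument, and I will need to confirm that the resulting set-theoretic identity matches the frame definition rather than its order-dual. Everything else should be a routine extension of the Raney machinery, with the remaining nontrivial content absorbed into the frame axioms (iii)-(vi).
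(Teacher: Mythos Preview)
Your plan is correct and follows the same Raney-lift strategy as the paper: start from the lattice-level isomorphism $\eta$ and verify that it respects the residuated operations, then run the poset isomorphism $w\mapsto{\uparrow}w$ back the other way for part~(ii). The one genuine methodological difference is in the nontrivial inclusion $\eta(a\cdot b)\subseteq\eta(a)\cdot\eta(b)$ of part~(i). You argue directly from complete additivity of $\cdot$ together with complete join-primeness of the elements of $\mathcal{J}^{\infty}(\mathcal{R})$; the paper instead invokes the prime-filter extension Lemma~\ref{filter}, enlarging $\uparrow a$ and $\uparrow b$ to prime filters whose product still lands in $\uparrow z$, and then uses perfectness to recognise those prime filters as principal filters generated by join-irreducibles $a',b'$. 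Your route is shorter and stays entirely inside the perfect-lattice toolkit; the paper's route reuses the same filter lemma that later drives the Priestley-style duality, so it buys uniformity across the two dualities at the cost of a heavier argument here. For the residuals the paper simply defers to Galatos, so your plan to check them by hand via the residuation biconditional is more explicit than what the paper actually does. One caution on execution: the paper's proof works throughout with the dual order $\leq_\delta$ on $\mathcal{J}^{\infty}(\mathcal{R})$, so that the ternary relation effectively unfolds as $j\leq j_1\cdot j_2$ rather than $j_1\cdot j_2\leq j$; your part~(ii) computation with $\uparrow u\cdot\uparrow v\supseteq\uparrow w$ already uses this convention correctly, and you will need the same reading in part~(i) for the unfolding of $R$ to match the frame-side product.
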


\begin{proof}
$ $

  \begin{enumerate}
    \item Let $R$ be a lattice reduct of $\mathcal{R}$. Accoding to the Raney representation, $\eta : R \cong \operatorname{Up}(\mathcal{J}^{\infty}(R), \leq_{\delta})$ such that $\eta : a \mapsto \{ b \in \mathcal{J}^{\infty}(R) \: | \: a \leq_{\delta} b\}$, where $\leq_{\delta}$ is a dual order on $\mathcal{J}^{\infty}(R)$. Let us ensure that this isomorphism also preserves products and residuals.

    Let $z \in \eta(a) \cdot \eta(b)$, then there exists $x \in \eta(a)$ and $y \in \eta(b)$ such that $x \cdot y \leq_{\delta} z$. That is, $a \leq_{\delta} x$ and $b \leq_{\delta} y$, so $a \cdot b \leq_{\delta} z$, then $z \in \eta(a \cdot b)$.

    Let $z \in \eta(a \cdot b)$, then $z$ is a join-irreducible element such that $a \cdot b \leq_{\delta} z$. Then $\uparrow z$ is a prime filter since $R$ is perfect distributive. It is clear that $\uparrow a \cdot \uparrow b \subseteq \uparrow b$. By Lemma~\ref{filter}, there exists prime filters $A$ and $B$ such that $\uparrow a \subseteq A$, $\uparrow b \subseteq B$,
    $A \cdot \uparrow b \subseteq \uparrow z$ and $\uparrow a \cdot B \subseteq \uparrow z$. $R$ is completely distributive, so there exists $a', b' \in \mathcal{J}^{\infty}(R)$ such that $\uparrow a' = A$ and $\uparrow b' = B$.
    Moreover, $\uparrow a \subseteq \uparrow a'$ and $\uparrow b \subseteq \uparrow b'$ implies $a \leq_{\delta} a'$ and $b \leq_{\delta} b'$, so $a' \in \eta(a')$ and $b' \in \eta(b)$. So $a' \cdot b' \leq_{\delta} z$, and, thus, $z \in \eta(a) \cdot \eta(b)$.

    $\eta$ preserves left and right residuals similarly to \cite[Lemma 6.10]{galatos2003varieties}. That was shown for arbitrary bounded residuated residuated lattices and the extended Priestley embedding.
    \item $\varepsilon : \langle W, \leq \rangle \to \langle \mathcal{J}^{\infty}(W, \leq), \leq_{\delta} \rangle$ is a poset isomorphism such that $\varepsilon : a \mapsto \uparrow a$. This isomorphism might be extended to the frame isomorphism via the frame conditions that connect a ternary relation with the partial order.
  \end{enumerate}
\end{proof}

\section{Discrete duality and completeness}

In this section, we establish a discrete duality between the categories of all Kripke frames and the category of all perfect residuated distributive modal algebras. We show that the Thomason theorem \cite{thomason1975categories} holds for normal residuated distributive modal logics.

\begin{definition}
  Let $\mathcal{L}$ be a perfect distributive residuated lattice and $\Box, \Diamond$ unary operators on $\mathcal{L}$, then
  $\mathcal{M} = \langle \mathcal{L}, \Box, \Diamond \rangle$ is called a perfect distributive residuated modal algebra, if for each where $A \subseteq \mathcal{L}$ and $a, b \in \mathcal{L}$
  \begin{itemize}
    \item $\Box \bigwedge A = \bigwedge \{ \Box a \: | \: a \in A \}$
    \item $\Diamond \bigvee A = \bigvee \{ \Diamond a \: | \: a \in A\}$
    \item $\Box a \cdot \Box b \leq \Box (a \cdot b)$
  \end{itemize}
\end{definition}
Given $\mathcal{M}$, $\mathcal{N}$ perfect residuated distributive modal algebras, a map $\mathcal{M} \to \mathcal{N}$ is a homomorphism if $f$ is a complete lattice homomorphism that preserves product, residuals, modal operators, and the multiplicative identity.

Let us show that the variety of all RDMAs is closed under canonical extensions.

\begin{lemma}\label{pRDMA}
  Let $\mathcal{R}$ be a istributive residuated lattice and $\mathcal{M} = \langle \mathcal{R}, \Box, \Diamond \rangle$ an RDMA, then $\mathcal{M}^{\sigma} = \langle \mathcal{R}^{\sigma}, \Box^{\pi}, \Diamond^{\pi} \rangle = \langle \mathcal{R}^{\sigma}, \Box^{\sigma}, \Diamond^{\sigma} \rangle$ is a perfect DRMA.
\end{lemma}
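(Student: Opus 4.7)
The plan is to verify three things in turn. First, by the preceding lemma, $\mathcal{R}^{\sigma}$ is already a perfect distributive residuated lattice, so it remains only to equip it with the two unary operations and check the modal axioms. Second, $\Box^{\pi}$ should preserve arbitrary meets and $\Diamond^{\sigma}$ arbitrary joins on $\mathcal{R}^{\sigma}$, with smoothness $\Box^{\sigma} = \Box^{\pi}$ and $\Diamond^{\sigma} = \Diamond^{\pi}$ justifying the notation in the statement. Third, and most importantly, the weak-normality inequation $\Box a \cdot \Box b \leq \Box(a \cdot b)$ must lift to the extension.

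I would dispatch the second point by invoking the standard Gehrke--J\'{o}nsson machinery already cited in the paper: a monotone map that preserves finite meets has a $\pi$-extension preserving arbitrary meets, and for such a map the $\sigma$- and $\pi$-extensions coincide; the dual statement handles $\Diamond$. This simultaneously yields $\Box^{\sigma}\top = \top$ and $\Diamond^{\sigma}\bot = \bot$, so only the third item requires genuine work.

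The main work is the third point, and my approach is to establish the inequation first on closed (filter) elements and then propagate it to the whole extension by density and continuity. For $k_{1}, k_{2} \in \mathcal{F}(\mathcal{R}^{\sigma})$, both $\Box^{\pi} k_{i}$ and $k_{1} \cdot^{\sigma} k_{2}$ remain closed and admit the explicit descriptions $\Box^{\pi} k_{i} = \bigwedge \{\Box a \mid a \in \mathcal{R},\ a \geq k_{i}\}$ and $k_{1} \cdot^{\sigma} k_{2} = \bigwedge \{x \cdot y \mid x \geq k_{1},\ y \geq k_{2},\ x, y \in \mathcal{R}\}$ supplied by the previous lemma. For each $x \geq k_{1}$ and $y \geq k_{2}$ in $\mathcal{R}$, monotonicity of $\cdot^{\sigma}$ together with the original inequation gives $\Box^{\pi} k_{1} \cdot^{\sigma} \Box^{\pi} k_{2} \leq \Box x \cdot \Box y \leq \Box(x \cdot y)$, and taking the meet over all such pairs produces the bound at the closed level. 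To extend to arbitrary $u, v \in \mathcal{R}^{\sigma}$, I would use that closed elements are join-dense, that $\Box^{\sigma} u = \bigvee \{\Box^{\sigma} k \mid k \in \mathcal{F}(\mathcal{R}^{\sigma}),\ k \leq u\}$ by the $\sigma$-formula, and that $\cdot^{\sigma}$ preserves arbitrary joins in each argument. Distributing the product over joins, applying the closed case, and finally invoking monotonicity of $\Box^{\sigma}$ together with $k_{1} \cdot^{\sigma} k_{2} \leq u \cdot^{\sigma} v$ delivers the full inequation.

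The delicate step is this last canonicity argument. The term $\Box a \cdot \Box b$ mixes the join-preserving operator $\cdot$ with the meet-preserving map $\Box$, so it is neither purely positive in operators nor in dual operators, and standard J\'{o}nsson--Tarski-style canonicity for Sahlqvist operator terms does not apply off the shelf. The argument must exploit smoothness of $\Box$ to freely identify $\sigma$- and $\pi$-extensions and rely on the explicit filter-level formulas; only after passing through the closed level can density and the join-continuity of $\cdot^{\sigma}$ finish the job.
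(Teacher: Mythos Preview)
Your proposal is correct and follows essentially the same route as the paper: both invoke the preceding results to handle the perfect distributive residuated lattice structure and the smoothness of $\Box$ and $\Diamond$, and then prove canonicity of $\Box a \cdot \Box b \leq \Box(a\cdot b)$ by first establishing it on closed (filter) elements via the explicit meet formulas and then lifting to all of $\mathcal{R}^{\sigma}$ using the $\sigma$-formula for $\Box^{\sigma}$, join-continuity of $\cdot^{\sigma}$, and monotonicity of $\Box^{\sigma}$. Your write-up is in fact somewhat more explicit than the paper's about why each equality in the chain holds.
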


\begin{proof} The lattice reduct of $\mathcal{R}^{\sigma}$ is a perfect distributive lattice, \cite{gehrke1994bounded}.
In fact, one needs to show that the inequation $\Box a \cdot \Box b \leq \Box (a \cdot b)$ is canonical. Firstly, let us suppose that $a, b \in \mathcal{C}(\mathcal{M}^{\sigma})$. Note that $\Box^{\sigma} a \cdot^{\sigma} \Box^{\sigma} b = \bigwedge \{ \Box x \cdot \Box y \: | \: a \leq x \in \mathcal{M}, b \leq y \in \mathcal{M} \}$ that follows from the definition of a filter element, the fact that $\Box^{\sigma}$ preserves all infina and $\cdot^{\sigma}$ is an order-preserving operation. Then:

  $\begin{array}{lll}
  & \Box^{\sigma} a \cdot^{\sigma} \Box^{\sigma} b = & \\
  & \bigwedge \{ \Box x \cdot \Box y \: | \: a \leq x \in \mathcal{L}, \leq x \in \mathcal{L} \} \leq \bigwedge \{ \Box (x \cdot y) \: | \: a \leq x \in \mathcal{L} \: \& \: b \leq x \in \mathcal{L} \} = & \\
  & \bigwedge \Box^{\sigma} \{(x \cdot y) \: | \: a \leq x \in \mathcal{L} \: \& \: b \leq x \in \mathcal{L} \} = & \\
  & \Box^{\sigma} \bigwedge \{(x \cdot y) \: | \: a \leq x \in \mathcal{L} \: \& \: b \leq x \in \mathcal{L} \} = \Box^{\sigma} (a \cdot^{\sigma} b)&
  \end{array}$

  \vspace{\baselineskip}

  Let $a, b \in \mathcal{L}^{\sigma}$, then

  $\begin{array}{lll}
  & \Box^{\sigma} a \cdot^{\sigma} \Box b = \bigvee \{ \Box^{\sigma} x \cdot^{\sigma} \Box^{\sigma} y \: | \: a \geq x \in \mathcal{C}(\mathcal{L}^{\sigma}) \: \& \: b \geq y \in \mathcal{C}(\mathcal{L}^{\sigma}) \} \leq & \\
  & \bigvee \{ \Box^{\sigma} (x \cdot^{\sigma} y) \: | \: a \geq x \in \mathcal{C}(\mathcal{L}^{\sigma}) \: \& \: b \geq y \in \mathcal{C}(\mathcal{L}^{\sigma}) \} \leq & \\
  & \Box^{\sigma} \bigvee \{ x \cdot^{\sigma} y \: | \: a \geq x \in \mathcal{C}(\mathcal{L}^{\sigma}) \: \& \: b \geq y \in \mathcal{C}(\mathcal{L}^{\sigma}) \} = \Box^{\sigma} (a \cdot b)&
  \end{array}$
\end{proof}

\begin{definition}
  A substructural normal modal logic $\mathcal{L}$ is called canonical, if $\mathcal{V}_{\mathcal{L}}$ is closed under canonical extensions
\end{definition}

The complex algebra of a Kripke frame $\mathcal{F} = \langle W, \leq, R, R_{\Box}, R_{\Diamond}, \mathcal{O} \rangle $ is the complex algebra of the underlying residuated frame $\mathcal{F}^{+}$ with the modal operators defined as $[R_{\Box}] A = \{ u \in W \: | \: \forall w \: (u R_{\Box} w \Rightarrow w \in A)\}$ and $\langle R_{\Diamond} \rangle = \{ u \in W \: | \: \exists w \: (u R_{\Diamond w} \: \& \: w \in A)\}$. Here $A$ is upwardly closed subset. These operations are well-defined. The dual frame of a perfect RDMA $\mathcal{M} = \langle M, \bigvee, \bigwedge, \Box, \Diamond, \cdot, \setminus, /, \varepsilon \rangle$ is the dual frame $\mathcal{M}_{+}$ of an underlying perfect distributive residuated lattice with binary relations on completely join irreducible elements introduced as $a R_{\Box} b \Leftrightarrow \Box \kappa(a) \leq \kappa(b)$ and $a R_{\Diamond} b \Leftrightarrow a \leq \Diamond b$. Here, $\kappa$ is an order isomorphism between between $\mathcal{J}^{\infty}(\mathcal{M})$ and $\mathcal{M}^{\infty}(\mathcal{M})$.

Logically, Kripke frames and their complex algebras are connected with each other as follows:
\begin{proposition}\label{complex}
  Let $\mathcal{F}$ be a Kripke frame, then $\operatorname{Log}(\mathcal{F}) = \operatorname{Log}(\mathcal{F}^{+}) = \{ \varphi \vdash \psi \: | \: \mathcal{F}^{+} \models \varphi \leq \psi \}$
\end{proposition}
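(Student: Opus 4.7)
The plan is to establish the equivalence by a straightforward truth-lemma style argument, exploiting the fact that the carrier of $\mathcal{F}^{+}$ consists of precisely the admissible valuation images, namely the upwardly closed subsets of $W$, and the operations of $\mathcal{F}^{+}$ have been defined so as to mirror the satisfaction clauses.

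First I would observe that a valuation $\vartheta : \operatorname{PV} \to \operatorname{Up}(W,\leq)$ on $\mathcal{F}$ is literally the same data as an assignment of the propositional variables to elements of the complex algebra $\mathcal{F}^{+}$. Given such a $\vartheta$, write $\llbracket \varphi \rrbracket_{\vartheta}$ for the interpretation of $\varphi$ in $\mathcal{F}^{+}$ extended recursively from $\vartheta$ using the algebraic operations of $\mathcal{F}^{+}$. I would then prove by induction on the structure of $\varphi$ the truth lemma
\[
\mathcal{M}, w \models \varphi \iff w \in \llbracket \varphi \rrbracket_{\vartheta},
\]
where $\mathcal{M} = \langle \mathcal{F}, \vartheta \rangle$. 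The atomic and Boolean cases are immediate from the definitions; the residuated cases $\bullet$, $\setminus$, $/$ match because the operations on $\operatorname{Up}(W,\leq)$ defined just before Theorem~\ref{restheorem} are written in exactly the same form as the Kripke clauses using the ternary relation $R$; the modal cases $\Box$, $\Diamond$ match because the complex operations $[R_{\Box}]$ and $\langle R_{\Diamond} \rangle$ were defined to replicate the Kripke clauses for the binary modal relations. The only point that requires care is to check that at each inductive step $\llbracket \varphi \rrbracket_{\vartheta}$ is indeed upwardly closed, i.e.\ really an element of $\mathcal{F}^{+}$; this follows from the frame conditions (iii) for $R$ and (vii) for $R_{\Box}$, $R_{\Diamond}$ in Definition~\ref{frame}, together with the argument already given in the text that $A \setminus B$ and $B / A$ are upwardly closed.

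With the truth lemma in hand, the two equalities follow at once. Unwinding definitions,
\[
\mathcal{F} \models \varphi \vdash \psi
\iff \forall \vartheta \; \forall w \in W \; \bigl(w \in \llbracket \varphi \rrbracket_{\vartheta} \Rightarrow w \in \llbracket \psi \rrbracket_{\vartheta}\bigr)
\iff \forall \vartheta \; \llbracket \varphi \rrbracket_{\vartheta} \subseteq \llbracket \psi \rrbracket_{\vartheta},
\]
and the rightmost condition is precisely $\mathcal{F}^{+} \models \varphi \leq \psi$, since the order on $\mathcal{F}^{+}$ is set inclusion. This gives both $\operatorname{Log}(\mathcal{F}) = \operatorname{Log}(\mathcal{F}^{+})$ in the sense of Definition~\ref{frame}, and the explicit description as $\{\varphi \vdash \psi \mid \mathcal{F}^{+} \models \varphi \leq \psi\}$.

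The main obstacle is bookkeeping rather than conceptual: one must be attentive to the direction of the ternary relation in the clauses for $\bullet$, $\setminus$, $/$ so that the algebraic operations on $\mathcal{F}^{+}$ line up with the correct arguments of $R$, and likewise confirm that the frame conditions are exactly what is needed for the inductive subsets to remain in $\operatorname{Up}(W,\leq)$. Once those verifications are done, the proof is essentially a rewriting of the semantic clauses in algebraic language.
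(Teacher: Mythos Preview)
Your argument is correct and is exactly the standard truth-lemma route one would expect. Note, however, that the paper does not actually supply a proof for this proposition: it is stated without argument and then the text moves on to the discrete duality theorem. So there is nothing in the paper to compare your approach against; you have simply filled in the routine verification that the author left implicit.
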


The following discrete duality theorem is merely a combination of Theorem~\ref{restheorem} and the similar fact proved for distributive modal algebras and frames for distributive modal logics \cite{Distr}.

\begin{theorem} \label{discrete}
$ $

  \begin{enumerate}
    \item Let $\mathcal{F}$ be a Kripke frame, then $\mathcal{F} \cong (\mathcal{F}^{+})_{+}$
    \item Let $\mathcal{M}$ be a perfect DRMA, then $\mathcal{M} \cong (\mathcal{M}_{+})^{+}$
    \item Functors $(.)_{+}: \operatorname{pDRMA} \rightleftarrows \operatorname{KF} : (.)^{+}$ establish a dual equivalence between the categories of all Kripke frames and all perfect RDMAs.
  \end{enumerate}
\end{theorem}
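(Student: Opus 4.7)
The plan is to leverage Theorem~\ref{restheorem}, which already carries out the residuated-lattice skeleton of the duality at the object level, together with the discrete duality for distributive modal algebras from~\cite{Distr}. What remains is to check that the two binary modal relations $R_{\Box}$ and $R_{\Diamond}$ cooperate with the ternary-relational part, and that the bijections on objects promote to a categorical dual equivalence.

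For part (1), starting from $\mathcal{F} = \langle W, \leq, R, R_{\Box}, R_{\Diamond}, \mathcal{O} \rangle$ I would reuse the map $\varepsilon : w \mapsto {\uparrow}w$ furnished by Theorem~\ref{restheorem}(2); this already takes $\leq$, $R$ and $\mathcal{O}$ to the corresponding data of $(\mathcal{F}^{+})_{+}$. For the $R_{\Diamond}$ clause I would unfold ${\uparrow}u \leq \langle R_{\Diamond}\rangle\, {\uparrow}v$ in $\mathcal{F}^{+}$ and use the monotonicity half of clause~(7) of Definition~\ref{frame} to collapse the resulting existential to $u R_{\Diamond} v$. The $R_{\Box}$ clause is analogous but uses the antitonicity half of clause~(7) together with the completely meet-irreducible element $W \setminus {\downarrow}w$; under the order-isomorphism $\kappa$ between $\mathcal{J}^{\infty}(\mathcal{F}^{+})$ and $\mathcal{M}^{\infty}(\mathcal{F}^{+})$, the principal up-set ${\uparrow}w$ is sent precisely to this co-principal down-complement, so the defining inequality $\Box\kappa({\uparrow}u) \leq \kappa({\uparrow}v)$ unfolds to $u R_{\Box} v$.

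For part (2), I would start from a perfect DRMA $\mathcal{M}$ and reuse the Raney-style isomorphism $\eta$ from Theorem~\ref{restheorem}(1). Compatibility with product and residuals is already built into Theorem~\ref{restheorem}, and preservation of $\Box$ and $\Diamond$ under $\eta$ is exactly the discrete duality for distributive modal algebras of~\cite{Distr}; since the relations $R_{\Box}$ and $R_{\Diamond}$ on $\mathcal{J}^{\infty}(\mathcal{M})$ are defined in the same way here, the identities $\eta(\Box a) = [R_{\Box}]\eta(a)$ and $\eta(\Diamond a) = \langle R_{\Diamond}\rangle \eta(a)$ come for free.

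For part (3), I would define $(\cdot)_{+}$ on an RDMA homomorphism $h : \mathcal{M} \to \mathcal{N}$ by restricting its left adjoint to $\mathcal{J}^{\infty}$, and $(\cdot)^{+}$ on a bounded morphism $f : \mathcal{F}_{1} \to \mathcal{F}_{2}$ by $A \mapsto f^{-1}[A]$; the six clauses of the bounded-morphism definition line up exactly with the operator-preservation conditions required of an RDMA homomorphism. The two object-level isomorphisms then serve as the natural unit and counit. The main technical obstacle I anticipate is the $R_{\Box}$ computation of part (1): because the defining equivalence intertwines completely join-irreducibles and completely meet-irreducibles via $\kappa$, I have to keep track of the order-reversal built into the Raney representation alongside the one introduced by $\kappa$, and invoke the antitonicity half of clause~(7) of Definition~\ref{frame} at precisely the right step; everything else is a routine unfolding of definitions.
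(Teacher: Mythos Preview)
Your proposal is correct and follows essentially the same route as the paper: items (1) and (2) are obtained by superimposing Theorem~\ref{restheorem} (for the ternary residuated part) with the discrete duality for distributive modal algebras from~\cite{Distr} (for $R_{\Box}$ and $R_{\Diamond}$), and item (3) is handled by sending a bounded morphism $f$ to $f^{-1}[-]$ and a perfect-DRMA homomorphism to the induced map on completely join-irreducibles. The paper's own proof is in fact terser than yours---it only spells out the morphism part of (3) and otherwise defers to Theorem~\ref{restheorem} and~\cite{Distr}---so your more explicit unfolding of the $R_{\Box}$/$R_{\Diamond}$ clauses via $\kappa$ and clause~(7) of Definition~2 is additional detail rather than a different strategy.
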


\begin{proof} It is easy to check that if $f : \mathcal{F}_1 \to \mathcal{F}_2$ is a bounded morphism of Kripke frames, then $f^{+} : {\mathcal{F}_2}^{+} \to {\mathcal{F}_1}^{+}$ such that $f^{+} : A \mapsto f^{-1}[A]$ is a perfect DRMA morphism. It is immediate that $h_{+} : {\mathcal{M}_2}_{+} \to {\mathcal{M}_1}_{+}$ is a bounded morphism, where $h : \mathcal{M}_1 \to \mathcal{M}_2$ is a perfect DRMA morphism. Thus, the dual equivalence follows from the previous two items and the lemma that claims that $(.)_{+}$ and $(.)^{+}$ are contravariant functors.
\end{proof}

The discrete duality established above together with canonical extensions of residuated distributive modal algebras provides the following consequence:

\begin{theorem}\label{complete}
  Let $\mathcal{L}$ be a canonical substructural distributive modal logic, then $\mathcal{L}$ is Kripke complete.
\end{theorem}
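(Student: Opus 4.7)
The plan is to combine algebraic completeness, the canonicity assumption, and the discrete duality of Theorem~\ref{discrete} in the standard J\'onsson–Tarski manner adapted to the residuated modal setting. Suppose $\varphi \vdash \psi \notin \mathcal{L}$. By Theorem~\ref{linden} the Lindenbaum–Tarski algebra $\mathcal{R}_{\mathcal{L}}$ lies in $\mathcal{V}_{\mathcal{L}}$ and refutes the inequation $\varphi \leq \psi$, so there is a valuation $v$ on $\mathcal{R}_{\mathcal{L}}$ with $v(\varphi) \not\leq v(\psi)$.

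Next I would pass to the canonical extension $\mathcal{R}_{\mathcal{L}}^{\sigma}$. Since the canonical embedding $\mathcal{R}_{\mathcal{L}} \hookrightarrow \mathcal{R}_{\mathcal{L}}^{\sigma}$ is an order-embedding that preserves all the basic operations $\wedge, \vee, \bot, \top, \cdot, \setminus, /, \varepsilon, \Box, \Diamond$ (by the definitions of $\cdot^{\sigma}, \setminus^{\pi}, /^{\pi}, \Box^{\sigma}, \Diamond^{\sigma}$ on elements of $\mathcal{R}_{\mathcal{L}}$ and the density of $\mathcal{R}_{\mathcal{L}}$ in $\mathcal{R}_{\mathcal{L}}^{\sigma}$), the same valuation, viewed in $\mathcal{R}_{\mathcal{L}}^{\sigma}$, still refutes $\varphi \leq \psi$. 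By Lemma~\ref{pRDMA}, $\mathcal{R}_{\mathcal{L}}^{\sigma}$ is a perfect RDMA, and by the canonicity hypothesis $\mathcal{R}_{\mathcal{L}}^{\sigma} \in \mathcal{V}_{\mathcal{L}}$, so $\mathcal{R}_{\mathcal{L}}^{\sigma} \models \mathcal{L}$.

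Now I would apply the discrete duality of Theorem~\ref{discrete}. Set $\mathcal{F} := (\mathcal{R}_{\mathcal{L}}^{\sigma})_{+}$; by part (2) of that theorem, $\mathcal{R}_{\mathcal{L}}^{\sigma} \cong \mathcal{F}^{+}$ as perfect RDMAs, hence $\mathcal{F}^{+} \in \mathcal{V}_{\mathcal{L}}$ and $\mathcal{F}^{+} \not\models \varphi \leq \psi$. Proposition~\ref{complex} then yields on the one hand $\mathcal{F} \models \mathcal{L}$ (since every sequent of $\mathcal{L}$ corresponds, via $\mathcal{F}^{+} \in \mathcal{V}_{\mathcal{L}}$, to a valid inequation in $\mathcal{F}^{+}$) and on the other hand $\mathcal{F} \not\models \varphi \vdash \psi$. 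Thus $\mathcal{F}$ is a Kripke frame for $\mathcal{L}$ refuting the given sequent, which shows $\varphi \vdash \psi \notin \operatorname{Log}(\operatorname{Frames}(\mathcal{L}))$ and gives the desired completeness.

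The main point requiring care is the transfer from refutation in $\mathcal{R}_{\mathcal{L}}$ to refutation in $\mathcal{R}_{\mathcal{L}}^{\sigma}$: one must know that $\cdot^{\sigma}$, $\setminus^{\pi}$, $/^{\pi}$, $\Box^{\sigma}$ and $\Diamond^{\sigma}$ all agree with their non-extended counterparts on $\mathcal{R}_{\mathcal{L}}$ itself. For the lattice reduct and for $\Box, \Diamond$ this is standard (Gehrke–J\'onsson); for the residuated operations it follows from the explicit formulas given before Lemma~\ref{pRDMA}, since for $x \in \mathcal{R}_{\mathcal{L}}$ the defining meets and joins over filter/ideal elements collapse to $x$ itself. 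Once this is noted, the rest of the argument is a routine assembly of Theorem~\ref{linden}, Lemma~\ref{pRDMA}, Theorem~\ref{discrete} and Proposition~\ref{complex}.
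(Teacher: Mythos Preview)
Your proposal is correct and follows essentially the same approach as the paper: both arguments pass from the Lindenbaum--Tarski algebra to its canonical extension (using Lemma~\ref{pRDMA} and the canonicity hypothesis), then apply the discrete duality of Theorem~\ref{discrete} and Proposition~\ref{complex} to obtain a Kripke frame for $\mathcal{L}$ that refutes any non-theorem. Your version is simply the contrapositive formulation of the paper's sketch, and you are more explicit than the paper about why refutation transfers along the canonical embedding $\mathcal{R}_{\mathcal{L}} \hookrightarrow \mathcal{R}_{\mathcal{L}}^{\sigma}$.
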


\begin{proof}
  The proof is similar to the analogous fact proved in \cite{Distr}, but we reproduce a sketch.

  Let $\varphi \vdash \psi \in \mathcal{L}$. Then $\mathcal{M}_{\mathcal{L}} \models \varphi \leq \psi$ by Theorem~\ref{linden}.
  But $\mathcal{M}_{\mathcal{L}} \models \varphi \leq \psi$ iff and only $(\mathcal{M}_{\mathcal{L}})^{\sigma} \models \varphi \leq \psi$ since $\mathcal{M}_{\mathcal{L}}$
  is a subalgebra of $(\mathcal{M}_{\mathcal{L}})^{\sigma}$ by Lemma~\ref{pRDMA} and the condition according to which $\mathcal{V}_{\mathcal{L}}$ is a canonical variety.
  By discrete duality, Theorem~\ref{discrete}, $({(\mathcal{M}_{\mathcal{L}})^{\sigma}}_{+})^{+} \models \varphi \leq \psi$.
  By Proposition~\ref{complex}, ${(\mathcal{M}_{\mathcal{L}})^{\sigma}}_{+} \models \varphi \vdash \psi$. In fact, ${(\mathcal{M}_{\mathcal{L}})^{\sigma}}_{+}$ is a canonical frame of a given logic $\mathcal{L}$ and we showed that ${(\mathcal{M}_{\mathcal{L}})^{\sigma}}_{+} \models \mathcal{L}$.
\end{proof}

As a consequence, the minimal normal substructural distributive modal logic is complete with respect to the class of all Kripke frames.
\begin{corollary}\label{completeK}
  ${\bf L}_{\bf K}$ is Kripke-complete.
\end{corollary}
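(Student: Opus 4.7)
The plan is to deduce this corollary directly from Theorem~\ref{complete} by checking that ${\bf L}_{\bf K}$ meets the hypothesis of being a canonical logic. Since Theorem~\ref{complete} already delivers Kripke-completeness once canonicity is in hand, the only genuine task is to verify that the variety $\mathcal{V}_{{\bf L}_{\bf K}}$ is closed under canonical extensions.

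First, I would identify $\mathcal{V}_{{\bf L}_{\bf K}}$ explicitly. Because ${\bf L}_{\bf K}$ is the minimal substructural distributive normal modal logic, the lattice isomorphism from logics to varieties (established earlier via the standard Lindenbaum--Tarski technique) sends ${\bf L}_{\bf K}$ to the largest variety on the lattice side, namely the variety of all RDMAs. Thus closure of $\mathcal{V}_{{\bf L}_{\bf K}}$ under canonical extensions amounts to closure of the whole class of RDMAs under $(\cdot)^{\sigma}$.

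Next, I would invoke Lemma~\ref{pRDMA}, which is exactly the required closure statement: for any RDMA $\mathcal{M}$, the canonical extension $\mathcal{M}^{\sigma}$ is again an RDMA (indeed a perfect one). This already handles the lattice, residuated, and modal axioms, since the distributive lattice reduct's canonical extension is a perfect distributive lattice, the residuation inequalities and the ${\bf K}$-axioms for $\Box$ and $\Diamond$ are automatically preserved by $\sigma$-extension of operators and dual operators, and the nontrivial axiom $\Box a \cdot \Box b \leq \Box(a\cdot b)$ was verified to be canonical in the proof of Lemma~\ref{pRDMA}. Hence $\mathcal{V}_{{\bf L}_{\bf K}}$ is canonical.

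Having established canonicity, I would conclude by applying Theorem~\ref{complete} directly to obtain ${\bf L}_{\bf K} = \operatorname{Log}(\operatorname{Frames}({\bf L}_{\bf K}))$. In fact, the argument of Theorem~\ref{complete} shows completeness with respect to the single canonical frame $((\mathcal{M}_{{\bf L}_{\bf K}})^{\sigma})_{+}$, which in particular belongs to the class of all Kripke frames, so ${\bf L}_{\bf K}$ is complete with respect to the class of all Kripke frames. There is no genuine obstacle here: the only conceptual point is recognising that the minimal logic corresponds to the full variety, which reduces the corollary to an immediate application of the previous lemma and theorem.
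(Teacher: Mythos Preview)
Your proposal is correct and matches the paper's approach: the corollary is presented there as an immediate consequence of Theorem~\ref{complete}, with canonicity of $\mathcal{V}_{{\bf L}_{\bf K}}$ (the variety of all RDMAs) supplied by Lemma~\ref{pRDMA}. Your unpacking of why the minimal logic corresponds to the full variety and how Lemma~\ref{pRDMA} furnishes the closure under canonical extensions is exactly the intended reasoning.
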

Now we show that the following sequents that describe modalities as storage operators are canonical ones. This lemma partially repeats here \cite[Propositions 6.7 -- 6.10]{dunn2005}.
\begin{lemma}\label{canonseq} The following sequents are canonical:
  \begin{enumerate}
    \item $\Box p \bullet q \dashv \vdash q \bullet \Box p$
    \item $\Box p \vdash \Box p \bullet \Box p$
    \item $\Box p \bullet q \vdash \Box p \bullet q \bullet \Box p$, $q \bullet \Box p \vdash \Box p \bullet q \bullet \Box p$
    \item $\Box p \vdash \varepsilon$
    \item $\Box (p \land q) \dashv \vdash \Box p \bullet \Box q$
    \item $\Box (p \setminus q) \vdash \Diamond p \setminus \Diamond q$ and $\Box (p / q) \vdash \Diamond p / \Diamond q$
  \end{enumerate}
\end{lemma}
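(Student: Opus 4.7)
The plan is to verify, for each sequent $\varphi \vdash \psi$ in the list, that $\mathcal{M} \models \varphi \leq \psi$ implies $\mathcal{M}^{\sigma} \models \varphi \leq \psi$; by Theorem~\ref{complete} this yields Kripke completeness for every extension of ${\bf L}_{\bf K}$ by such sequents. The main tools I will use, all established above, are: the smoothness of the modal operators, so that $\Box^{\sigma} = \Box^{\pi}$ preserves arbitrary meets and $\Diamond^{\sigma} = \Diamond^{\pi}$ preserves arbitrary joins; the fact that $\cdot^{\sigma}$ is a complete operator in each coordinate; and the meet-density of $\mathcal{F}(\mathcal{L}^{\sigma})$ (respectively join-density of $\mathcal{I}(\mathcal{L}^{\sigma})$) in $\mathcal{L}^{\sigma}$, together with the explicit unfolding $\Box^{\sigma} f = \bigwedge \{ \Box u : f \leq u \in \mathcal{L}\}$ on a filter element $f$, and the dual statement for $\Diamond$ on ideal elements.

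For items (1)--(5) I will follow a uniform template. Given $p, q \in \mathcal{L}^{\sigma}$, write each as a join of filter elements below it, then use the complete-operator property of $\cdot^{\sigma}$ to push the joins outside. This reduces the problem to filter arguments $x, y$. On these, both $\Box^{\sigma} x$ and $\cdot^{\sigma}$-products of filter elements unfold as meets ranging over $\mathcal{L}$-approximants $u \geq x$, $v \geq y$. The inequality to verify then becomes a double-meet statement that reduces, pointwise over the approximants, to a consequence of the original sequent inside $\mathcal{M}$. For instance, for (2) I would use the chain $\Box u \wedge \Box v = \Box(u \wedge v) \leq \Box(u \wedge v) \cdot \Box(u \wedge v) \leq \Box u \cdot \Box v$ valid in $\mathcal{M}$; for (4) the one-line $\Box u \leq \varepsilon$; for (1) the $\cdot$-centrality of $\Box u$ in $\mathcal{M}$; and items (3) and (5) by analogous monotonicity manipulations. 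Re-joining over the filter approximants via the complete-operator property of $\cdot^{\sigma}$ then delivers the inequality in $\mathcal{M}^{\sigma}$.

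Item (6) is the main obstacle, because its right-hand sides mix $\Diamond$ (whose canonical extension is join-preserving) with the residuals, whose canonical extensions are $\pi$-extensions of dual operators. My plan is to re-residuate: in $\mathcal{M}$, the inequality $\Box(p \setminus q) \leq \Diamond p \setminus \Diamond q$ is equivalent by residuation to $\Diamond p \cdot \Box(p \setminus q) \leq \Diamond q$, in which the residual is sealed inside $\Box$. Applied to $\mathcal{M}^{\sigma}$, the expression $\Box^{\sigma}(p \setminus^{\pi} q)$ unfolds on a filter $x \leq p$ and ideal $y \geq q$ as a meet of $\Box(u \setminus v)$ with $u, v \in \mathcal{L}$, while $\Diamond^{\sigma} p$ unfolds dually as a join of $\Diamond$-values on ideal elements. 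Distributing $\cdot^{\sigma}$ over the joins and invoking the $\mathcal{M}$-inequality $\Diamond u \cdot \Box(u \setminus v) \leq \Diamond v$ (itself a residuation of the original) gives the lifted statement. Since $(\cdot^{\sigma}, \setminus^{\pi})$ is an adjoint pair in $\mathcal{M}^{\sigma}$ by the definition of a perfect distributive residuated lattice, residuating back yields $\Box^{\sigma}(p \setminus^{\pi} q) \leq \Diamond^{\sigma} p \setminus^{\pi} \Diamond^{\sigma} q$; the argument for $/$ is symmetric.

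The hardest technical step is the precise tracking of $\sigma$ versus $\pi$ extensions in item (6): one must approximate $\Box$-arguments from above by filter elements in $\mathcal{L}$ and $\Diamond$-arguments from below by ideal elements, and confirm that the final re-residuation in $\mathcal{M}^{\sigma}$ genuinely uses the adjoint pair $(\cdot^{\sigma}, \setminus^{\pi})$. Items (1)--(5) then follow routinely from the template, with the only delicate point being the $\geq$ direction of (5), where the two meet-indices must be matched so that the $\mathcal{M}$-axiom $\Box u \cdot \Box v \leq \Box(u \wedge v)$ can be applied uniformly in $u$ and $v$.
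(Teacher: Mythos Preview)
Your two-stage template for items (1)--(5) is exactly what the paper does: it carries out only item (3), reducing first to filter elements (where $\Box^{\sigma}$ and $\cdot^{\sigma}$ unfold as meets of $\mathcal{L}$-approximants) and then to arbitrary elements via the join over filter approximants, declaring the other items analogous. Your $\mathcal{M}$-level manipulations for (1), (2), (4), (5) are correct.

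For item (6) the paper gives no argument (it defers to \cite{dunn2005}). Your residuation strategy is the right idea, but the unfolding you describe is backwards. For a filter element $x\leq p$ and an ideal element $y\geq q$, the element $x\setminus^{\pi} y$ is an \emph{ideal} element, so $\Box^{\sigma}(x\setminus^{\pi} y)=\bigvee\{\Box a : a\in\mathcal{L},\ a\leq x\setminus^{\pi} y\}$ is a \emph{join}, not a meet, of values $\Box(u\setminus v)$; and $\Box^{\sigma}(p\setminus^{\pi} q)$ is then a meet of such joins. Your remark that ``$\Diamond^{\sigma} p$ unfolds as a join of $\Diamond$-values on ideal elements'' is similarly inverted. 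With these directions wrong, the step ``distributing $\cdot^{\sigma}$ over the joins and invoking $\Diamond u\cdot\Box(u\setminus v)\leq\Diamond v$'' does not go through as stated, because the meet--join nesting does not let you isolate a single pair $(u,v)$.

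The clean fix is to push the residuation one step further \emph{before} lifting. In $\mathcal{M}$ the axiom $\Box(p\setminus q)\leq\Diamond p\setminus\Diamond q$ is equivalent (take $r=p\setminus q$, respectively $q=p\cdot r$) to the residual-free inequality $\Diamond p\cdot\Box r\leq\Diamond(p\cdot r)$. This form fits your template directly: for filter elements $x,y$ one gets $\Diamond^{\sigma}x\cdot^{\sigma}\Box^{\sigma}y=\bigwedge\{\Diamond u\cdot\Box v: u\geq x,\ v\geq y\}\leq\bigwedge\{\Diamond(u\cdot v)\}=\Diamond^{\sigma}(x\cdot^{\sigma}y)$, and the extension to arbitrary elements is the usual join step. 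Since $(\cdot^{\sigma},\setminus^{\pi})$ is residuated in $\mathcal{M}^{\sigma}$, substituting $r=p\setminus^{\pi}q$ and using $p\cdot^{\sigma}(p\setminus^{\pi}q)\leq q$ recovers the original form; the $/$ case is symmetric.
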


\begin{proof}
Let us check only the third sequent. The rest sequents might be checked similarly.

Let ${\bf K}_{nc}$ be ${\bf L}_{{\bf K}} \oplus \Box a \bullet b \vdash \Box a \bullet b \bullet \Box a$ and $\mathcal{V}_{{\bf K}_{nc}}$ its variety. Let us show that $\mathcal{V}_{{\bf K}_{nc}}$ is closed under canonical extensions.

Let $\mathcal{M} \in \mathcal{V}_{{\bf K}_{nc}}$, one needs to check that $\mathcal{M}^{\sigma} \in \mathcal{V}_{{\bf K}_{nc}}$. Let $a, b \in \mathcal{C}(\mathcal{M}^{\sigma})$.

  $\begin{array}{lll}
  & \Box^{\sigma} a \cdot^{\sigma} b = \bigwedge \{ \Box x \: | \: a \leq x \in \mathcal{M} \} \cdot^{\sigma} \bigwedge \{ y \: | \: b \leq y \in \mathcal{M} \} = & \\
  & \bigwedge \{ \Box x \cdot y \: | \: a \leq x \in \mathcal{M} \: \& \: b \leq y \in \mathcal{M} \} \leq & \\
  & \bigwedge \{ \Box x \cdot y \cdot \Box x \: | \: a \leq x \in \mathcal{M} \: \& \: b \leq y \in \mathcal{M} \} = \Box^{\sigma} a \cdot^{\sigma} b \cdot^{\sigma} \Box^{\sigma} a&
  \end{array}$

  \vspace{\baselineskip}

  Let $a, b \in \mathcal{M}^{\sigma}$, then

  $\begin{array}{lll}
  &\Box^{\sigma} a \cdot^{\sigma} b = \bigvee \{ \Box^{\sigma} x \cdot^{\sigma} y \: | \: a \geq x \in \mathcal{C}(\mathcal{M}^{\sigma}) \: \& \: b \geq y \in \mathcal{C}(\mathcal{M}^{\sigma}) \} \leq & \\
  & \bigvee \{ \Box^{\sigma} x \cdot^{\sigma} y \cdot^{\sigma} \Box^{\sigma} x \: | \: a \geq x \in \mathcal{C}(\mathcal{M}^{\sigma}) \: \& \: b \geq y \in \mathcal{C}(\mathcal{M}^{\sigma}) \} = \Box^{\sigma} a \cdot^{\sigma} b \cdot^{\sigma} \Box^{\sigma} a&
  \end{array}$
\end{proof}

The following completeness theorems follows from Theorem~\ref{complete}, Corollary~\ref{completeK}, and Lemma~\ref{canonseq}.

\begin{corollary}\label{structcompelete}
  Let $\Gamma$ be the set of all sequents from the lemma above and $\Delta \subseteq \Gamma$. Then $\mathcal{L} = {\bf L}_{{\bf K}} \oplus \Delta$ is Kripke complete.
\end{corollary}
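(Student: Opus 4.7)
The plan is to reduce the corollary directly to Theorem~\ref{complete} by checking that $\mathcal{L} = {\bf L}_{{\bf K}} \oplus \Delta$ is a canonical substructural distributive modal logic, i.e.\ that its variety $\mathcal{V}_{\mathcal{L}}$ is closed under the canonical extension construction $(\cdot)^{\sigma}$.

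First I would observe that $\mathcal{V}_{\mathcal{L}} = \mathcal{V}_{{\bf L}_{{\bf K}}} \cap \bigcap_{\sigma \in \Delta} \mathcal{V}_{\sigma}$, where $\mathcal{V}_{\sigma}$ denotes the subvariety of RDMAs axiomatised by the single inequation corresponding to the sequent $\sigma$. By Lemma~\ref{pRDMA}, the ambient variety $\mathcal{V}_{{\bf L}_{{\bf K}}}$ of all RDMAs is itself closed under canonical extensions, since the inequation $\Box a \cdot \Box b \leq \Box(a \cdot b)$ and the normality conditions on $\Box, \Diamond$ survive the passage to $\mathcal{M}^{\sigma}$. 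Thus the task reduces to checking canonicity of each additional axiom $\sigma \in \Delta$ separately.

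Next I would invoke Lemma~\ref{canonseq}, which asserts that each sequent in $\Gamma$ is canonical. Concretely, if $\mathcal{M} \in \mathcal{V}_{\sigma}$ then $\mathcal{M}^{\sigma} \in \mathcal{V}_{\sigma}$ for every $\sigma \in \Gamma \supseteq \Delta$. Combined with the previous paragraph, this yields $\mathcal{M} \in \mathcal{V}_{\mathcal{L}} \Rightarrow \mathcal{M}^{\sigma} \in \mathcal{V}_{\mathcal{L}}$, because the canonical extension operation is applied once to $\mathcal{M}$ and the result simultaneously satisfies every member of $\Delta$ together with the RDMA axioms. Hence $\mathcal{V}_{\mathcal{L}}$ is closed under canonical extensions, so $\mathcal{L}$ is canonical in the sense of the definition preceding Theorem~\ref{complete}.

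Finally, I would apply Theorem~\ref{complete} to conclude that $\mathcal{L}$ is Kripke complete. I do not anticipate a serious obstacle here: the only nontrivial content was already absorbed into Lemma~\ref{canonseq} (the $\sigma$/$\pi$ computations on filter and ideal elements for each individual axiom) and into Lemma~\ref{pRDMA} (the canonicity of the base RDMA axioms). The corollary itself is a packaging step that exploits the fact that canonicity of a variety is stable under forming intersections with other canonical subvarieties, which in turn is immediate from the pointwise definition: a single algebra $\mathcal{M}^{\sigma}$ satisfies every axiom whose truth in $\mathcal{M}$ lifts to $\mathcal{M}^{\sigma}$.
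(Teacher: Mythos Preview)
Your proposal is correct and matches the paper's own argument: the corollary is derived immediately from Theorem~\ref{complete} together with the canonicity of the base logic (Lemma~\ref{pRDMA}/Corollary~\ref{completeK}) and the canonicity of each axiom in $\Gamma$ (Lemma~\ref{canonseq}). Your extra remark that closure under canonical extensions is preserved by intersecting subvarieties is the only thing you spell out that the paper leaves implicit, and it is exactly the packaging step needed.
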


One may also consider the subexponential polymodal case introduced in \cite{kanovich2019subexponentials}. Let us define a subexponential signature:

\begin{definition} A subexponential signature is an ordered quintuple: $\Sigma = \langle \mathcal{I}, \preceq, \mathcal{W}, \mathcal{C}, \mathcal{E} \rangle$,
where $\langle \mathcal{I}, \preceq \rangle$ is a preorder. $\mathcal{W}, \mathcal{C}, \mathcal{E}$ are upwardly closed subsets of $\mathcal{I}$ and $\mathcal{W} \cap \mathcal{C} \subseteq \mathcal{E}$.
\end{definition}

Let us define the following axioms:

\begin{itemize}
  \item $\Box_{s_1} p \bullet \Box_{s_2} q \vdash \Box_{s} (p \bullet q)$, $s \preceq s_1, s_2$
  \item $\Box_s p \vdash p$
  \item $\Box_s p \vdash \Box_s \Box_s p$
  \item $\Box_s p \bullet q \vdash \Box_s p \bullet q \bullet \Box_s p$, $q \bullet \Box_s p \vdash \Box_s p \bullet q \bullet \Box_s p$, where $s \in \mathcal{C}$
  \item $\Box_s p \bullet q \dashv \vdash p \bullet \Box_s q$, $s \in \mathcal{E}$
  \item $\Box_s p \vdash {\bf 1}$, where $s \in \mathcal{W}$
\end{itemize}

The system $\operatorname{DSMALC}_{\Sigma}$ is a substructural distributive polymodal logic with modal axioms as above plus $\Box_s (p \land q) \dashv \vdash \Box_s p \land \Box_s q$ and $\Box_s \top \dashv \vdash \top$ for each $s \in \mathcal{\Sigma}$. For simplicity, let us put the diamond axioms $\Diamond_s (p \lor q) \Leftrightarrow \Diamond_s p \lor \Diamond_s q$ and $\Diamond_s \bot \dashv \vdash \bot$ with the ${\bf S}4$ axioms for $\Diamond$ $p \vdash \Diamond_s p$ and $\Diamond_s \Diamond_s p \vdash \Diamond_s p$ for each $s \in \mathcal{\Sigma}$ axioms without any additional postulates. The modal inference rules have the form: from $\varphi \vdash \psi$ infer $\nabla_{s_2} \varphi \vdash \nabla_{s_1} \psi$ for $s_1 \preceq s_2$, where $nabla = \Box, \Diamond$.

\begin{theorem}
   $\operatorname{DSMALC}_{\Sigma}$ is canonical and, thus, Kripke complete.
\end{theorem}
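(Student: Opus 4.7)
The plan is to invoke Theorem~\ref{complete}: it suffices to show that the variety $\mathcal{V}_{\operatorname{DSMALC}_\Sigma}$ is closed under canonical extensions, i.e.\ that every defining inequation of $\operatorname{DSMALC}_\Sigma$ is preserved under $(\cdot)^{\sigma}$. The general pattern, already exercised in Lemma~\ref{pRDMA} and Lemma~\ref{canonseq}, is to verify the inequation first for closed (filter) elements of $\mathcal{M}^{\sigma}$ by reducing to the base inequation in $\mathcal{M}$, and then to extend to arbitrary elements by density of closed and open elements together with monotonicity of the operations.

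The $\mathbf{K}$-type axioms for each $\Box_s$ and $\Diamond_s$ (distribution over finite meets and joins, normality with $\top$ and $\bot$) are handled exactly as in Lemma~\ref{pRDMA}, since $\Box_s^{\sigma}$ preserves arbitrary meets on $\mathcal{C}(\mathcal{M}^{\sigma})$ and $\Diamond_s^{\sigma}$ preserves arbitrary joins on the ideal elements. The polymodal product axiom $\Box_{s_1} p \bullet \Box_{s_2} q \vdash \Box_s (p \bullet q)$ with side condition $s \preceq s_1, s_2$ is a direct adaptation of the monomodal computation in Lemma~\ref{pRDMA}: on closed $a, b$ one rewrites $\Box_{s_1}^{\sigma} a \cdot^{\sigma} \Box_{s_2}^{\sigma} b$ as $\bigwedge \{ \Box_{s_1} x \cdot \Box_{s_2} y \mid a \leq x \in \mathcal{M},\ b \leq y \in \mathcal{M} \}$, applies the base-algebra inequality termwise to land in $\Box_s(x \cdot y)$, and then extends to arbitrary $a, b \in \mathcal{M}^{\sigma}$ via join-density of $\mathcal{C}(\mathcal{M}^{\sigma})$ and monotonicity of $\cdot^{\sigma}$ and $\Box_s^{\sigma}$. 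The structural-style axioms indexed by $\mathcal{W}$, $\mathcal{C}$, $\mathcal{E}$ are polymodal copies of items in Lemma~\ref{canonseq}: contraction at $s \in \mathcal{C}$ replays item~(3), commutation at $s \in \mathcal{E}$ replays item~(1), and weakening $\Box_s p \vdash \mathbf{1}$ at $s \in \mathcal{W}$ replays item~(4), so the same two-stage $\sigma/\pi$ calculation applies verbatim for each admissible index. The $\mathbf{T}$-axiom $\Box_s p \vdash p$ and the $\mathbf{4}$-axiom $\Box_s p \vdash \Box_s \Box_s p$, together with the $\mathbf{S4}$ axioms for $\Diamond_s$, are classical Sahlqvist-type canonical inequalities in the DLE setting whose canonicity proof follows the same scheme.

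The place where one must be careful is the bookkeeping for the side conditions involving the preorder $\preceq$, which appear both in axiom (1) and in the modal inference rule $\varphi \vdash \psi \Rightarrow \nabla_{s_2}\varphi \vdash \nabla_{s_1}\psi$ for $s_1 \preceq s_2$. Fortunately $\preceq$ does not interact with the canonical-extension machinery at all: it is a purely syntactic constraint on which index combinations license an instance of the axiom or rule, so once $\mathcal{M}$ satisfies the axioms the same index-relations certify validity of the $\sigma$-versions in $\mathcal{M}^{\sigma}$. One also has to check that the $\mathcal{W}, \mathcal{C}, \mathcal{E}$-upward-closure conditions transfer trivially, but this is a property of $\Sigma$ itself, not of the algebra. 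Having verified canonicity of every axiom of $\operatorname{DSMALC}_\Sigma$, the variety $\mathcal{V}_{\operatorname{DSMALC}_\Sigma}$ is closed under $(\cdot)^{\sigma}$, and Kripke completeness follows from Theorem~\ref{complete}.
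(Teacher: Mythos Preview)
Your proposal is correct and follows essentially the same approach as the paper: reduce to Theorem~\ref{complete}, verify canonicity of the polymodal product axiom $\Box_{s_1} p \bullet \Box_{s_2} q \vdash \Box_s(p\bullet q)$ by mimicking the two-stage computation of Lemma~\ref{pRDMA}, and handle the remaining structural axioms by invoking (the polymodal analogues of) Lemma~\ref{canonseq}. If anything you are slightly more explicit than the paper, which does not spell out that the ${\bf T}$, ${\bf 4}$ and ${\bf S4}$-type axioms for $\Box_s$ and $\Diamond_s$ also require a canonicity check; your remark that these are standard Sahlqvist-type inequalities fills that small gap.
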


\begin{proof}
  One may show that the variety $\mathcal{V}_{\operatorname{DSMALC}_{\Sigma}}$ is canonical showing that
  $\Box_{s_1} p \cdot \Box_{s_2} q \leq\Box_{s} (p \cdot q)$ for $s \preceq s_1, s_2$ is a canonical inequation similarly to Lemma~\ref{pRDMA}. The whole statement follows from the observation above and Theorem~\ref{complete}, Lemma~\ref{canonseq}, and Corollary~\ref{structcompelete}.
\end{proof}

\section{Topological duality}

In this section, we characterise a topological duality for residuated distributive modal algebras
in the same fashion as in
\cite{Esakia2019} \cite{Sambin1988TopologyAD}. We consider topological Kripke frames, ternary Kripke frames defined on Priestley spaces with binary modal relations, the category of which is dually equivalent to the category of all RDMAs. Alternatively, one may characterise such a duality in terms of general descriptive frames following the Goldblatt's approach \cite{goldblatt1993mathematics}. See the Appendix to have an explanation of the Priestley duality related definitions, terms, and notations.

Firstly, we consider a Priestley-style duality for residuated distributive bounded lattices. We piggyback the construction obtained by Galatos in his PhD thesis \cite{galatos2003varieties}. This construction is a noncommutative generalisation of relevant spaces, the dual spaces of relevant algebras studied by Urquhart \cite{Urq}. In fact, those spaces and their extensions with modal relations are the instances of relational Priestley spaces \cite{GOLDBLATT1989173}.

\begin{definition}
  Let $\mathcal{X} = \langle X, \tau, \leq \rangle$ be a Priestley space, $R \subseteq X^3$ and $E \subseteq X$.
  A bDRL-space is a tuple $\mathcal{X} = \langle X, \tau, \leq, R, E \rangle$ such that:

  \begin{enumerate}
    \item For all $x, y, z, w \in X$ there exists $u \in X$  such that $R(x,y,u)$ and $R(u,z,w)$ iff
    there exists $v \in X$ such that $R(y,z,v)$ and $R(x, v, w)$.
    \item For all $x, y, z, w \in X$ if $x \leq y$ then $R(y, u, v)$ implies $R(x, u, v)$,
    $R(u, y, v)$ implies $R(u, x, v)$, and $R(u, v, y)$ implies $R(u, v, x))$.
    \item Let $A, B \subseteq X$ be upwardly closed clopens then $R[A, B, \rule{0.25cm}{0.15mm} ]$, $\{ z \in X \: | \: R[z, B, \rule{0.25cm}{0.15mm}] \subseteq A\}$, and $\{ z \in X \: | \: R[B, z, \rule{0.25cm}{0.15mm} ]\}$ are also clopens.
    \item For all $x, y, z \in X$  $\neg R(x,y,z)$ iff there exists upwardly closed clopens $A, B \subseteq X$ such that that there exist $x \in A$ and $y \in B$ such that $z \notin R[A,B,\rule{0.25cm}{0.15mm}])$.
    \item $E$ is upwardly closed cloped such that if $A$ is clopen then $R[E, A, \rule{0.25cm}{0.15mm}] = R[A, E, \rule{0.25cm}{0.15mm}] = A$.
  \end{enumerate}

  Here $R[A, B, \rule{0.25cm}{0.15mm} ]$ denotes $\{ c \in X \: | \: \exists a \in A \: \exists b \in B \:\: R a b c \}$.
\end{definition}

We note that such a space is totally disconnected concerning a ternary relation according to the fourth condition. We introduce topological Kripke frames as bDRL-spaces with modal relations as follows.

\begin{definition} A \emph{modal bDRL-space} is a structure $\mathcal{X} = \langle X, \tau, \leq, R, R_{\Box}, R_{\Diamond}, E \rangle$, where $\langle X, \tau, \leq, R, E \rangle$ is a bDRL space and the following conditions hold:

\begin{enumerate}
\item If $A$ is upwardly closed clopen, then $R_{\Box}(A), R_{\Diamond}(A)$ are upwardly closed clopens.
\item For each $x \in X$, $R_{\Box}(x)$ and $R_{\Diamond}(x)$ are closed.
\item $R u v w \: \& \: w R_{\Box} w' \Rightarrow \exists x, y \in W \:\: R x y w' \: \& \: u R_{\Box} x \: \& \: v R_{\Box} y$.
\item $u \leq v \: \& \: v R_{\Box} w \Rightarrow u R_{\Box} w$.
\item $u \leq v \: \& \: u R_{\Diamond} w \Rightarrow v R_{\Diamond} w$.
\end{enumerate}
\end{definition}

Given a residuated distributive modal algebra $\mathcal{M} = \langle \mathcal{R}, \Diamond, \Box \rangle$ on a bounded distributive lattice $\mathcal{R}$, we define the set of all prime filters $\operatorname{PF}(\mathcal{L})$ and a map $\phi$ similarly to the bounded distributive lattice case described in the paper appendix. Let us define a ternary relation $R \subseteq \operatorname{PF}(\mathcal{L}) \times \operatorname{PF}(\mathcal{L}) \times \operatorname{PF}(\mathcal{L})$ as
$R(A, B, C) \Leftrightarrow A \bullet B \subseteq C$ and $E = \phi(\varepsilon)$. We also define binary relations $R_{\Box}$ and $R_{\Diamond}$ on $ \operatorname{PF}(\mathcal{L})$ as $A R_{\Diamond} B \Leftrightarrow b \in B \Rightarrow \Diamond b \in A$ and
$A R_{\Box} B \Leftrightarrow \Box a \in A \Rightarrow a \in B$.

Standardly, the subbasis of the topology $\tau$ is defined with by the sets $\phi(a)$ and $- \phi(a)$, $a \in \mathcal{M}$. Then the structure $\langle \operatorname{PF}(\mathcal{M}), \tau, E, R, R_{\Box}, R_{\Diamond} \rangle$ is the \emph{dual space} of a residuated distributive modal algebra $\mathcal{M}$.

Let $\mathcal{X} = \langle X, \tau, \leq, R, R_{\Box}, R_{\Diamond}, E \rangle$ be a modal bDRL-space and
$\operatorname{ClUp}(\mathcal{X})$ the set of all upwardly closed clopens of $\mathcal{X}$. We define product
as a binary operation on $\operatorname{ClUp}(\mathcal{X})$ as $A \circ B = R [A, B, \rule{0.25cm}{0.15mm}]$. Residuals are $A \setminus B = \{ c \in X \: | \: A \circ c \subseteq B \}$ and $B / A = \{ c \in X \: | \: c \circ A \subseteq B \}$. Modal operators $[R_{\Box}]$, $R_{\Diamond}$ are defined as $[R_{\Box}] A = \{ a \in X \: | \: \forall b \in X (a R_{\Box} b \Rightarrow b \in A)\}$
and $\langle R_{\Diamond} \rangle A = \{ a \in X \: | \: \exists b \in X (a R_{\Diamond} b \: \& \: b \in A)\}$.
The structure $\langle \operatorname{ClUp}(\mathcal{X}), \cup, \cap, \emptyset, X, \circ, \setminus, /, E, [R_{\Box}], \langle R_{\Diamond} \rangle \rangle$ is the \emph{dual algebra} of a modal bDRL-space $\mathcal{X}$. One may show that $\phi$ commutes with products and residuals \cite{galatos2003varieties}, that is:

$\phi$ commutes with modal operators as $\phi(\Diamond a) = \langle R_{\Diamond} \rangle [\phi(a)]$ and $\phi(\Box a) = [R_{\Box}] [\phi(a)]$ that also may be shown similarly to Proposition 5.2.1 here \cite{palmigiano2004dualities}.

The key theorem is the following one:

\begin{theorem}
$ $

\begin{enumerate}
\item The dual algebra of a modal bDRL-space $\mathcal{X}$ is an RDMA.
\item The dual space of a residuated distributive modal algebra $\mathcal{M}$ is a modal bDRL-space.
\end{enumerate}
\end{theorem}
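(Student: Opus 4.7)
The plan is to leverage the bDRL-space/bDRL-algebra duality already proved by Galatos \cite{galatos2003varieties} for the residuated-lattice reducts and focus the entire argument on the modal operators. In both directions the bounded distributive residuated lattice (respectively the bDRL-space) structure is handled for free; what remains in (1) is to verify the ${\bf K}$-axioms for $[R_{\Box}], \langle R_{\Diamond}\rangle$ together with the weak normality inequation $[R_\Box]A \circ [R_\Box]B \subseteq [R_\Box](A\circ B)$, and in (2) to verify the modal conditions (1)--(5) for $R_{\Box}, R_{\Diamond}$ on the prime-filter space.

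For (1), the preservation of finite meets and joins by $[R_{\Box}]$ and $\langle R_{\Diamond}\rangle$ on $\operatorname{ClUp}(\mathcal{X})$ is an unfolding of the definitions, and $[R_\Box]A, \langle R_\Diamond\rangle A$ land in $\operatorname{ClUp}(\mathcal{X})$ thanks to conditions (1), (4) and (5) of the modal bDRL-space. The interesting check is the weak normality inequation: given $w \in [R_{\Box}]A \circ [R_{\Box}]B$ pick witnesses $u,v$ with $Ruvw$, $u \in [R_{\Box}]A$, $v \in [R_{\Box}]B$; then for any $w'$ with $wR_{\Box}w'$, condition (3) of the modal bDRL-space definition produces $x, y$ with $Rxyw'$, $uR_{\Box}x$ and $vR_{\Box}y$, so $x \in A$, $y \in B$ and hence $w' \in A \circ B$.

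For (2), conditions (1), (4) and (5) follow from the standard Priestley computations $\phi(\Box a) = [R_{\Box}]\phi(a)$ and $\phi(\Diamond a) = \langle R_{\Diamond}\rangle \phi(a)$, together with the set-theoretic observation that on prime filters $P \subseteq Q$ gives $\{a : \Box a \in P\} \subseteq \{a : \Box a \in Q\}$ and similarly for $\Diamond$. Condition (2), closedness of $R_{\Box}(x)$ and $R_{\Diamond}(x)$ for fixed $x$, is obtained by writing each as an intersection of subbasic closed sets of the Priestley topology and invoking compactness, in the style of \cite{celani1999priestley,palmigiano2004dualities}.

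The main obstacle is condition (3) of the modal bDRL-space definition in direction (2): given prime filters $P, Q, T, T'$ with $P \bullet Q \subseteq T$ and $T R_{\Box} T'$, one must produce prime filters $X, Y$ with $P R_{\Box} X$, $Q R_{\Box} Y$ and $X \bullet Y \subseteq T'$. I would set $X_0 = \{a \in \mathcal{M} : \Box a \in P\}$ and $Y_0 = \{b \in \mathcal{M} : \Box b \in Q\}$, which are filters because $\Box$ preserves binary meets and $\top$. The key calculation is $X_0 \cdot Y_0 \subseteq T'$: for $a \in X_0$ and $b \in Y_0$ we have $\Box a \in P$ and $\Box b \in Q$, hence $\Box a \cdot \Box b \in P \cdot Q \subseteq P \bullet Q \subseteq T$; the weak normality axiom $\Box a \cdot \Box b \leq \Box(a \cdot b)$ then gives $\Box(a \cdot b) \in T$, and $T R_{\Box} T'$ yields $a \cdot b \in T'$. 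Item (3) of Lemma~\ref{filter} now extends $X_0, Y_0$ to prime filters $X \supseteq X_0$, $Y \supseteq Y_0$ with $X \bullet Y \subseteq T'$, and by construction $P R_{\Box} X$ and $Q R_{\Box} Y$. The delicacy is that one must reach primeness of $X$ and $Y$ while preserving the product containment into $T'$, which is precisely what that lemma grants.
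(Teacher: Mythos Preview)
Your proposal is correct and follows essentially the same route as the paper: for (1) you verify the weak normality inequation $[R_\Box]A \circ [R_\Box]B \subseteq [R_\Box](A \circ B)$ directly from frame condition (3), and for (2) you construct the filters $X_0 = \{a : \Box a \in P\}$, $Y_0 = \{b : \Box b \in Q\}$, show $X_0 \cdot Y_0 \subseteq T'$ via the algebraic weak normality axiom, and then extend to prime filters using Lemma~\ref{filter}(3), exactly as the paper does. Your treatment of the remaining modal conditions (closedness of $R_\Box(x)$, $R_\Diamond(x)$ via an Esakia-type argument, and the monotonicity/clopen conditions via the commutation of $\phi$ with the modal operators) also matches the paper's references to Esakia's lemma and the intuitionistic modal logic case.
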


\begin{proof}

\begin{enumerate}
  \item Let $A, B$ be upwardly closed clopens, let us show that $[R_{\Box}] A \bullet [R_{\Box}] B \subseteq [R_{\Box}] (A \bullet B)$.
  Let $c \in [R_{\Box}] A \bullet [R_{\Box}] B$, then $c \in R[R_{\Box}A, [R_{\Box}] B, \rule{0.25cm}{0.15mm}]$.
  This denotes that $R (a,b,c)$ for some $a \in R_{\Box} A$ and $b \in R_{\Box} B$. Let $c' \in X$ such that $c R_{\Box} c'$, let us show that $c' \in A \bullet B$.
  $R(a, b, c)$ and $c R_{\Box} c'$ implies that $R(a', b', c')$ for some $a' \in R_{\Box} (a)$ and $b' \in R_{\Box} (b)$ by the definition of a modal bDRL-space. Thus, $a' \in A$ and $b' \in B$, so $c' \in A \bullet B$.
  \item We check the condition that connect the ternary relation with the $\Box$-relation. Let $A, B$ be prime filters. Let us suppose that $R A B C$ and $C R_{\Box} C'$, where $A, B, C, C'$ are prime filters. Let us show that $R A' B' C'$ for some prime filters $A', B'$. Let us put $A_1 := \{ a \in X \: | \: \Box a \in A \}$ and $B_1 := \{ b \in X \: | \: \Box b \in B \}$. $A_1$ and $B_1$ are clearly filters. Now let us show that $A_1 \cdot B_1 \subseteq C'$. Let $a \cdot b \in A_1 \cdot B_1$, then $\Box a \in A$ and $\Box b \in B$. So $\Box a \cdot \Box b \in A \bullet B$, so $\Box (a \cdot b) \in A \bullet B$. Hence $\Box (a \cdot b) \in C$ and $a \cdot b \in C'$. By Lemma~\ref{filter}, there exist prime filters $A' \supseteq A_1$ and $B' \supseteq B_1$ such that $A'' B'' \subseteq C'$.

Let $A$ be a prime filter, then $R_{\Box} (A)$ and $R_{\Diamond}(A)$ are closed that might be shown as the well-known Esakia's lemma \cite{esakia1974topological}. Let $A$ be an upwardly closed clopen, $R_{\Box}(A)$ and $R_{\Diamond} (A)$ are upwardly closed clopens. The last two facts might be proved similarly to the intutionistic modal logic case \cite{palmigiano2004dualities}.
\end{enumerate}
\end{proof}

\begin{definition}
Given modal bDRL spaces $\mathcal{X}$, $\mathcal{Y}$, a contnuous bounded morphism is a map
$f : \mathcal{X} \to \mathcal{Y}$ such that $f$ is a Priestley map that preserves ternary and binary relations as a bounded morphism.
\end{definition}

As usual, the previous lemma allows one to claim that $\operatorname{PF}$ and $\operatorname{ClUp}$ are contravariant functors. The following theorem establish a desired topological duality itself.

\begin{theorem}
$ $

\begin{enumerate}
\item Let $\mathcal{M}$ be a residuated distributive modal algebra, then
$\operatorname{ClUp}(\operatorname{PF}(\mathcal{M})) \cong \mathcal{M}$
\item Let $\mathcal{X}$ be a modal dBRL-space, then
$\operatorname{PF}(\operatorname{ClUp}(\mathcal{X})) \cong \mathcal{X}$
\item Contravariant functors $\operatorname{PF} : \operatorname{RDMA} \rightleftarrows \operatorname{TKF} : \operatorname{ClUp}$ constitute a dual equivalence between the category of all substructural distributive modal algebras and the category of all modal dBRL-spaces.
\end{enumerate}
\end{theorem}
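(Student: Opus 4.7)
The plan is to piggyback on Priestley duality for bounded distributive lattices together with its residuated refinement from Galatos' thesis, adding only the verification that the modal operators and the modal relations match up. Throughout, the map $\phi : \mathcal{M} \to \operatorname{ClUp}(\operatorname{PF}(\mathcal{M}))$ sending $a \mapsto \{A \in \operatorname{PF}(\mathcal{M}) \mid a \in A\}$ and the map $\varepsilon : \mathcal{X} \to \operatorname{PF}(\operatorname{ClUp}(\mathcal{X}))$ sending $x \mapsto \{U \in \operatorname{ClUp}(\mathcal{X}) \mid x \in U\}$ are already known to be, respectively, a bounded lattice isomorphism and a Priestley-space homeomorphism; the residuated enrichment of Galatos extends these to isomorphisms of the residuated structure, including the designated set $E$ and the ternary relation $R$.

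For part (1), beyond these classical facts I must check only that $\phi(\Box a) = [R_{\Box}]\phi(a)$ and $\phi(\Diamond a) = \langle R_{\Diamond}\rangle \phi(a)$. The $\subseteq$ direction for $\Box$ is immediate from the definition of $R_{\Box}$ on prime filters. For the converse, if $\Box a \notin A$ one separates the filter $\{x \mid \Box x \in A\}$ from the ideal $\downarrow a$ by a prime filter $B$, using the standard prime filter extension lemma, so that $A R_{\Box} B$ but $a \notin B$; this is done in the same fashion as in Celani–Jansana and Palmigiano for the intuitionistic/positive modal case. The diamond clause is dual. For part (2), once $\varepsilon$ is known to be an order-topological iso and to preserve $R$ and $E$, I must verify that it preserves and reflects $R_{\Box}$ and $R_{\Diamond}$. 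Preservation is straightforward. Reflection uses an Esakia-style compactness argument: because $R_{\Box}(x)$ is closed and the clopen up-sets form a subbasis that separates points from closed up-sets (via Priestley separation), one shows $R_{\Box}(x) = \bigcap\{U \in \operatorname{ClUp}(\mathcal{X}) \mid R_{\Box}(x) \subseteq U\} = \bigcap\{U \in \operatorname{ClUp}(\mathcal{X}) \mid x \in [R_{\Box}]U\}$; combining this with the induced relation $\varepsilon(x) R_{\Box} \varepsilon(y) \iff \forall U\,(\,x \in [R_{\Box}]U \Rightarrow y \in U\,)$ forces $y \in R_{\Box}(x)$. The diamond case is analogous, using the closedness of $R_{\Diamond}(x)$.

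Part (3) is then routine checking. A continuous bounded morphism $f : \mathcal{X} \to \mathcal{Y}$ induces an RDMA homomorphism $f^{-1} : \operatorname{ClUp}(\mathcal{Y}) \to \operatorname{ClUp}(\mathcal{X})$, where preservation of $\circ$, $\setminus$, $/$, $[R_{\Box}]$, $\langle R_{\Diamond}\rangle$, and $E$ follows from the bounded-morphism clauses on $R$ and $R_{\nabla}$; dually, an RDMA homomorphism $h : \mathcal{M} \to \mathcal{N}$ induces a continuous bounded morphism $h^{-1} : \operatorname{PF}(\mathcal{N}) \to \operatorname{PF}(\mathcal{M})$, with the back-conditions on the relations again verified by a prime-filter extension argument. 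Naturality of $\phi$ and $\varepsilon$ is immediate, so the functors form a dual equivalence. The step I expect to be hardest is the reflection of $R_{\Box}$ in part (2): the naive argument fails because arbitrary up-sets in $\mathcal{X}$ need not be closed, so the closedness hypothesis on $R_{\Box}(x)$ in the definition of a modal bDRL-space must be invoked precisely here, paired with Priestley separation to descend from arbitrary closed up-sets to clopen ones.
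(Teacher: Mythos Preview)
Your proposal is correct and follows essentially the same strategy as the paper: both lean on Priestley duality plus Galatos' residuated refinement for the lattice/ternary part, and both defer the modal clauses to the Celani--Jansana/Palmigiano positive/intuitionistic modal framework, with the Esakia-style closedness argument for reflection of $R_{\Box}$ and $R_{\Diamond}$. Your write-up is in fact more explicit than the paper's own proof, which merely sketches the isomorphisms and points to the same references.
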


\begin{proof}
$ $

(i) The isomorphism is map $a \mapsto \phi(a)$ that commutes with products, residuals and modal operators as discussed above.

(ii) A homeomorphism is a map $g : x \mapsto \{ A \in \operatorname{ClUp} \: | \: x \in A \}$. As it is shown by Galatos,
$R_{\mathcal{X}}(x,y,z) \Leftrightarrow R_{\operatorname{PF}(\operatorname{ClUp}(\mathcal{X}))}(g(x), g(y), g(x))$.
One may immediately extend this homeomorphism and show that this map commutes with binary modal relations.

(iii) Follows from the previous two items and the previous theorem. Let us ensure briefly that these functors behave as expected with morphisms. Let $h : \mathcal{M}_1 \to \mathcal{M}_2$ be an RDMA homomorphism, then a map
$\operatorname{PF}(h) : \operatorname{PF}(\mathcal{M}_2) \to \operatorname{PF}(\mathcal{M}_1)$ such that
$\operatorname{PF}(h) : F \mapsto h^{-1} [F]$ is a Priestley map, where $F$ is a prime filter. This map also satisfies the monotonicity and
lifting properties for a ternary relation, see \cite{galatos2003varieties}. One may show that $\operatorname{PF}(h)$ has the monotonicity and lifting properties for $[R_{\Box}]$ and $\langle R_{\Diamond} \rangle$ similarly to the intutionistic modal logic case. On the other hand, $\operatorname{ClUp}(f) : \operatorname{ClUp}(Y) \to \operatorname{ClUp}(X)$ such that
$\operatorname{ClUp}(f) : A \mapsto f^{-1}[A], A \in \operatorname{ClUp}(Y)$ is a lattice homomorphism. $\operatorname{ClUp}(f)$
also preserves product and residuals. $\operatorname{ClUp}(f)$ preserves $[R_{\Box}]$ and $\langle R_{\Diamond} \rangle$
similarly to the intuitionistic modal logic case.
\end{proof}

Hence, we obtained the Priestley-style duality for the category of residuated distributive modal algebras and the category of all topological Kripke frames introduced by us.

\section{Further work}

In this paper, we examined canonicity for the distributive full Lambek calculus and its modal extensions within a ``usual'' Kripkean semantics. The further questions that should be solved are Sahlqvist and Goldblatt-Thomason theorems for such semantics and its non-distributive generalisation to study canonicity and modal definability for noncommutative modal logic with residuals in depth. One may consider for these purposes the frameworks described in \cite{conradie2018goldblatt} and \cite{DBLP:journals/apal/ConradieP19}.

One may also consider a Kleene star as a modal operator \cite{kuznetsov2018continuity}, but such a modality is neither $\Box$ nor $\Diamond$. The (non)canonicity of the variety of residuated Kleene lattices also should be studied and explored considering a Kleene residuated lattice as a sort of bounded lattice with operators.

We took the Lambek calculus with additive connections as the underlying logic requiring the lacking distributivity principle which is unprovable in the full Lambek calculus. A Priestley-style topological duality provided in this paper might be extended considering dual spaces for non-distributive residuated modal algebras using the canonical extensions technique studied by Gehrke and van Gool, e.g., here \cite{gehrke2014distributive}.

The distributive Lambek calculus also has a sort of Gentzen cut-free calculus and FMP \cite{kozak2009distributive}, but the same issues for the modal extensions are not investigated yet.

\Appendix

In this section, we recall the required background related to bounded distributive lattice canonical extensions and topological duality. Such notions as lattice, distributive lattice, filter, and prime filters are supposed to be known. We refer the reader to these textbooks
\cite{davey2002introduction} \cite{Sank}.

Let us recall Priestley duality, the dual equivalence between the caterory of bounded distributive lattices and the category of Priestley spaces \cite{priestley1972ordered}. First of all, we define a Priestley space.

\begin{definition}
  $ $

  \begin{enumerate}
  \item A Priestley space is a triple $\mathcal{X} = \langle X, \tau, \leq \rangle$, where $\langle X, \tau \rangle$ is a compact topological space, $\langle X, \leq \rangle$ is a bounded partial order with the additional Priestley separation axiom:
  \begin{center}
    $x \not\leq y \Rightarrow$ there exists a clopen $U \subseteq X$ such that $x \in U$ and $y \not\in U$
  \end{center}
  \item Let $\mathcal{X}_1, \mathcal{X}_2$ be Priestley spaces and $f : \mathcal{X}_1 \to \mathcal{X}_2$, then $f$ is a Priestley map, if $f$ is continuous, order-preserving and preserves bounds.
 \end{enumerate}
\end{definition}

Let $\mathcal{L}$ be a bounded distributive lattice and $\operatorname{PF}$ the set of prime filters in $\mathcal{L}$. Let us define a map $\phi : \mathcal{L} \to \mathcal{P}(\operatorname{PF}(\mathcal{L}))$ such that $\phi : a \mapsto \{ F \in \operatorname{PF}(\mathcal{L}) \: | \: a \in F \}$. The sets $\phi(a)$ and $- \phi(a)$ form a subbasis of topology on $\operatorname{PF}(\mathcal{L})$, where $a \in \mathcal{L}$. The structure $\langle \operatorname{PF}(\mathcal{L}), \tau, \subseteq \rangle$ is a Priestley space, where $\tau$ is generated by the subbasis above.

Let $\mathcal{X} = \langle X, \tau, \leq \rangle$ be a Priestley space and $\operatorname{ClUp}(\mathcal{X})$ the set of all clopen upwardly closed subsets of $\mathcal{X}$. The dual algebra of a Priestley space is an algebra $\langle \operatorname{ClUp}(\mathcal{X}), \cap, \cup, \emptyset, X \rangle$, which is a bounded distribute lattice.

Let $\mathcal{L}$ be a bounded distributive lattice, then
$\eta_{\mathcal{L}} : \mathcal{L} \to \operatorname{ClUp}(\operatorname{PF}(\mathcal{L}))$ is a lattice isomorphism. Given a Priestley space $\mathcal{X}$, then $\varepsilon_{\mathcal{X}} : \mathcal{X} \to \operatorname{PF}(\operatorname{ClUp}(\mathcal{X}))$ is a Priestley homeomorphism. Moreover, if $h : \mathcal{L}_1 \to \mathcal{L}_2$ is a bounded lattice homomorphism,
then $\operatorname{PF}(h) : \operatorname{PF}(\mathcal{L}_2) \to \operatorname{PF}(\mathcal{L}_1)$ is a Priestley map, where $\operatorname{PF}(h) : A \mapsto h^{-1}(A)$. If $f : \mathcal{X}_1 \to \mathcal{X}_2$ is a Priestley map, then $\operatorname{ClUp}(f) : \operatorname{ClUp}(\mathcal{X}_2) \to \operatorname{CU}(\mathcal{X}_1)$ is a bounded lattice homomorphism, where $\operatorname{ClUp}(f) : A \mapsto f^{-1}(A)$. The facts mentioned above establish Priestley duality:

\begin{theorem}
  The functors $\operatorname{PF} : {\bf BDistr} \rightleftarrows {\bf Priest} : \operatorname{ClUp}$ constitute a dual equivalence between the category of all bounded distributive lattices and the category of all Priestley spaces.
\end{theorem}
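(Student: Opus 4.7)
The plan is to verify the three ingredients of a dual equivalence separately: first that $\operatorname{PF}$ and $\operatorname{ClUp}$ are well-defined contravariant functors, then that the unit $\eta$ and counit $\varepsilon$ given in the statement preceding the theorem are isomorphisms, and finally that they are natural. Most of the functorial bookkeeping has already been recorded in the paragraphs immediately before the theorem, so I would treat those as granted and concentrate on the two iso-claims.

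For the unit $\eta_{\mathcal{L}} : a \mapsto \phi(a)$, I would first check that it is a bounded lattice homomorphism, which reduces to the identities $\phi(a\land b)=\phi(a)\cap\phi(b)$, $\phi(a\lor b)=\phi(a)\cup\phi(b)$, $\phi(\bot)=\emptyset$, $\phi(\top)=\operatorname{PF}(\mathcal{L})$; these are standard consequences of primeness of filters. Injectivity would follow from the Prime Filter Theorem for bounded distributive lattices: if $a\not\leq b$ then the filter $\uparrow a$ is disjoint from the ideal $\downarrow b$, so Zorn's lemma yields a prime filter separating them, whence $\phi(a)\not\subseteq\phi(b)$. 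Surjectivity is the content of the claim that the clopen upsets of $\operatorname{PF}(\mathcal{L})$ are exactly the sets $\phi(a)$; I would argue this by compactness. Given a clopen upset $U$, use Priestley separation together with the Prime Filter Theorem to produce, for each $F\in U$ and $G\notin U$, some $a_{F,G}\in\mathcal{L}$ with $F\in\phi(a_{F,G})\subseteq X\setminus\{G\}$; compactness of $U$ and of $X\setminus U$ (both being closed in a compact space) and the fact that the subbase sets $\phi(a),\,-\phi(a)$ generate $\tau$ then let one collapse these into a single $a\in\mathcal{L}$ with $\phi(a)=U$, using distributivity and the lattice identities for $\phi$.

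For the counit $\varepsilon_{\mathcal{X}} : x\mapsto \{A\in\operatorname{ClUp}(\mathcal{X}) : x\in A\}$, I would first check that $\varepsilon_{\mathcal{X}}(x)$ really is a prime filter of $\operatorname{ClUp}(\mathcal{X})$, then verify injectivity via Priestley separation (if $x\not\leq y$ there is a clopen upset containing $x$ but not $y$, so $\varepsilon_{\mathcal{X}}(x)\neq\varepsilon_{\mathcal{X}}(y)$, and a symmetric argument handles $x\neq y$ with $x\leq y$ or $y\leq x$ failing). Surjectivity says every prime filter $F$ of $\operatorname{ClUp}(\mathcal{X})$ is of the form $\varepsilon_{\mathcal{X}}(x)$; the candidate point is any $x$ in $\bigcap F \cap \bigcap_{A\notin F}(X\setminus A)$, and the existence of such an $x$ reduces to showing this family of closed sets has the finite intersection property and then invoking compactness. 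Continuity and order-preservation of $\varepsilon_{\mathcal{X}}$ are immediate from the definition of $\tau$ on $\operatorname{PF}(\operatorname{ClUp}(\mathcal{X}))$; being a homeomorphism then follows because a continuous order-preserving bijection between compact Priestley spaces is automatically a Priestley isomorphism.

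Naturality of $\eta$ and $\varepsilon$ with respect to homomorphisms $h:\mathcal{L}_1\to\mathcal{L}_2$ and Priestley maps $f:\mathcal{X}_1\to\mathcal{X}_2$ is a direct chase using $\operatorname{PF}(h)(F)=h^{-1}(F)$ and $\operatorname{ClUp}(f)(A)=f^{-1}(A)$, so I would dispatch it in one paragraph. The main obstacle is the compactness-plus-separation argument behind surjectivity of $\eta$; once that is in hand, everything else is either routine bookkeeping or a standard application of the Prime Filter Theorem.
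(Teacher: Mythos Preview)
The paper does not actually prove this theorem: it is placed in the appendix as background material, with the surrounding sentences merely \emph{stating} (with a citation to Priestley's original paper) that $\eta_{\mathcal{L}}$ is a lattice isomorphism, that $\varepsilon_{\mathcal{X}}$ is a Priestley homeomorphism, and that $\operatorname{PF}$ and $\operatorname{ClUp}$ send morphisms to morphisms of the right kind. No argument is given; the theorem is simply recalled. So there is no ``paper's own proof'' to compare against.

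Your outline is the standard textbook proof of Priestley duality and is essentially correct. One small point worth tightening: when you conclude that $\varepsilon_{\mathcal{X}}$ is a Priestley isomorphism, you appeal to ``a continuous order-preserving bijection between compact Priestley spaces is automatically a Priestley isomorphism.'' The homeomorphism part is fine (continuous bijection between compact Hausdorff spaces), but to get that the inverse is order-preserving you actually need that $\varepsilon_{\mathcal{X}}$ is an order \emph{embedding}, i.e.\ $\varepsilon_{\mathcal{X}}(x)\subseteq\varepsilon_{\mathcal{X}}(y)$ implies $x\leq y$; this is exactly the Priestley separation axiom again, so it is immediate, but it should be said rather than folded into a general principle that is not quite true as stated.
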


  \vspace{\baselineskip}

Canonical extensions were introduced by Jonsson and Tarski to extend a Stone representation to Boolean algebras with operators \cite{BAOs}.
Let us overview canonical extensions of distributive lattice expansions. We refer the reader to these paper \cite{gehrke1994bounded} \cite{Gehrke_Jonsson_2004} to have a more detailed picture of bounded distributive lattice expansions and canonical extensions for them.

Given a complete lattice $\mathcal{L}$, $a \in \mathcal{L}$ is called \emph{completely join irreducible},
if $a = \bigvee \limits_{i \in I} a_i$ implies that $a = a_i$ for some $i \in I$. Completely meet irreducible elements are defined dually. By $J^{\infty}(\mathcal{L})$ ($M^{\infty}(\mathcal{L})$ we denote the set of all completely join (meet) irreducible elements. There is an order isomorphism $\kappa : J^{\infty}(\mathcal{L}) \to M^{\infty}(\mathcal{L})$ such that $\kappa : u \mapsto \bigvee (- \uparrow u)$ \cite{gehrke1994bounded}.

A complete lattice $\mathcal{L}$ is \emph{completely distributive} \cite{davey2002introduction}, if for each doubly indexed subset $\{ x_{ij} \}_{i \in I, j \in J}$ of $\mathcal{L}$ one has
\begin{center}
  $\bigwedge_{i \in I} (\bigvee_{j \in J} x_{ij}) = \bigvee_{\alpha : I \to J} (\bigwedge_{i \in I} x_{i\alpha(i)})$
\end{center}
A perfect distributive lattice is a doubly algebraic completely distributive lattice, that is:
\begin{definition} \label{cdlattice}
  Let $\mathcal{L}$ be a bounded distributive lattice, then $\mathcal{L}$ is called perfect distributive lattice, if it is completely distributive and every $x \in \mathcal{L}$ has the form $x = \bigvee \{ j \in J^{\infty}(\mathcal{L}) \: | \: j \leq x\}$ and
  $x = \bigwedge \{ m \in M^{\infty}(\mathcal{L}) \: | \: x \leq m \}$. That is,
  $J^{\infty}(\mathcal{L})$ and $M^{\infty}(\mathcal{L})$ are join-dense
  and meet-dense in $\mathcal{L}$ correspondingly.
\end{definition}

Given a lattice $\mathcal{L}$, by the completion of $\mathcal{L}$ we mean an embedding
$\mathcal{L} \hookrightarrow \mathcal{L}'$, where $\mathcal{L}'$ is a complete lattice. For simplicity, we assume that $\mathcal{L}'$ contains $\mathcal{L}$ as a sublattice.
The definition of a canonical extension is the standard one:
\begin{definition}
  Let $\mathcal{L}$ be a bounded distributive lattice, a canonical extension of $\mathcal{L}$ is a completion
  $\mathcal{L} \hookrightarrow \mathcal{L}^{\sigma}$, where $\mathcal{L}^{\sigma}$ is
  a complete lattice and the following conditions hold:
  \begin{enumerate}
    \item (Density) Every element of $\mathcal{L}^{\sigma}$ is both a join of meets and meets of joins of elements from $\mathcal{L}$
    \item (Compactness) Let $S, T \subseteq \mathcal{L}$ such that $\bigwedge S \leq \bigvee T$ in $\mathcal{L}^{\sigma}$, then there exist finite subsets $S' \subseteq S$ and $T' \subseteq T$ such that $\bigwedge S' \leq \bigvee T'$.
  \end{enumerate}
\end{definition}
Now we define filter and ideal elements, or, closed and open elements, according to the alternative terminology.
\begin{definition}
    Let $\mathcal{L}$ be a bounded distributive lattice and $\mathcal{L}^{\sigma}$ a canonical extension of $\mathcal{L}$. Let us define the following sets:
    \begin{enumerate}
      \item $\mathcal{F}(\mathcal{L}^{\sigma}) = \{ x \in \mathcal{L}^{\sigma} \: | \: \text{$x$ is a meet of elements from $\mathcal{L}$}\}$, the set of filter elements
      \item $\mathcal{I}(\mathcal{L}^{\sigma}) = \{ x \in \mathcal{L}^{\sigma} \: | \: \text{$x$ is a join of elements from $\mathcal{L}$}\}$, the set of ideal elements
    \end{enumerate}
\end{definition}
It is known that the poset $F(\mathcal{L}^{\delta})$ is isomorphic to the poset $\operatorname{Filt}(\mathcal{L})$, the set of all filters of $\mathcal{L}$ and the similar statement holds for $\mathcal{I}(\mathcal{L}^{\sigma})$ and the of all ideals of $\mathcal{L}$.
We recall that a canonical extension of a bounded lattice $\mathcal{L}$ is unique up to isomoprphism that fixes $\mathcal{L}$, see, e. g., \cite{Expan}. For each canonical extension $\mathcal{L} \hookrightarrow \mathcal{L}^{\sigma}$, the poset $\mathcal{F}(\mathcal{L}^{\sigma}) \cup \mathcal{I}(\mathcal{L}^{\sigma})$ is uniquely defined by $\mathcal{L}$. The uniqueness of a canonical extension of $\mathcal{L}$ up to an isomorphism fixing $\mathcal{L}$ follows from this observation.

In the case of bounded distributive lattices, one has the following fact \cite{gehrke1994bounded}:
\begin{prop}
  $ $

  Let $\mathcal{L}$ be a bounded distributive lattice, then $\mathcal{L}^{\sigma}$ is a perfect distributive lattice.
\end{prop}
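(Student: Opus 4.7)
The plan is to verify the three properties that make up perfect distributivity of $\mathcal{L}^{\sigma}$: completeness (already built into the construction), join-density of $\mathcal{J}^{\infty}(\mathcal{L}^{\sigma})$ together with meet-density of $\mathcal{M}^{\infty}(\mathcal{L}^{\sigma})$, and complete distributivity. I would proceed by first isolating the key structural fact that in the distributive setting the completely join-irreducible elements of $\mathcal{L}^{\sigma}$ are exactly those filter elements $x \in \mathcal{F}(\mathcal{L}^{\sigma})$ for which the corresponding filter of $\mathcal{L}$ is prime, and dually $\mathcal{M}^{\infty}(\mathcal{L}^{\sigma})$ consists of the ideal elements associated with prime ideals. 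This relies on the isomorphism between $\mathcal{F}(\mathcal{L}^{\sigma})$ and $\operatorname{Filt}(\mathcal{L})$ mentioned in the appendix and the fact that in the presence of distributivity a filter element is completely join-irreducible in $\mathcal{L}^{\sigma}$ iff its complement in $\mathcal{L}^{\sigma}$ cannot be split, which by compactness pulls back to primeness downstairs.

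Second, I would derive join-density of $\mathcal{J}^{\infty}(\mathcal{L}^{\sigma})$ from density plus the Prime Filter Theorem for bounded distributive lattices. By density every $u \in \mathcal{L}^{\sigma}$ is a join of filter elements $\bigwedge F_i$, so it suffices to show that each filter element is a join of completely join-irreducibles below it. Given a filter $F$ of $\mathcal{L}$, the Prime Filter Theorem yields $F = \bigcap\{P : P \supseteq F,\ P \text{ prime}\}$; translated into $\mathcal{L}^{\sigma}$ via the order-reversing correspondence between filters and filter elements, this becomes $\bigwedge F = \bigvee\{x_P : P \supseteq F\}$, where each $x_P$ is completely join-irreducible. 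A compactness argument shows the join on the right is indeed $\bigwedge F$ in $\mathcal{L}^{\sigma}$: if $y \geq x_P$ for all such $P$ but $y < \bigwedge F$, then by density $y$ is below some ideal element, and compactness produces a finite witness that contradicts primeness of some extension. The meet-density claim is handled symmetrically using prime ideals and $\kappa$.

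Third, for complete distributivity I would combine the two preceding density statements with compactness. Take a doubly indexed family $\{x_{ij}\}$ in $\mathcal{L}^{\sigma}$; the inequality $\bigvee_{\alpha} \bigwedge_i x_{i\alpha(i)} \leq \bigwedge_i \bigvee_j x_{ij}$ is automatic in any complete lattice. For the converse, it suffices to test against arbitrary $p \in \mathcal{J}^{\infty}(\mathcal{L}^{\sigma})$ on the left and $q \in \mathcal{M}^{\infty}(\mathcal{L}^{\sigma})$ on the right by density; each such $p$ is associated with a prime filter and each $q$ with a prime ideal, and compactness lets one replace the infinite joins and meets by finite ones inside $\mathcal{L}$, where distributivity of $\mathcal{L}$ finishes the job and produces the required choice function $\alpha$.

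I expect the main obstacle to be the complete distributivity step, specifically the bookkeeping needed to convert the infinitary choice-function statement into a finitary distributive-lattice fact via compactness while keeping the pointwise tests by join- and meet-irreducibles coherent; the identification of $\mathcal{J}^{\infty}(\mathcal{L}^{\sigma})$ and $\mathcal{M}^{\infty}(\mathcal{L}^{\sigma})$ with prime filter/ideal data is essentially standard once one is comfortable with the filter/ideal-element calculus recalled in the appendix.
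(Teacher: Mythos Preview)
The paper does not actually prove this proposition: it is stated in the appendix as a known fact with a citation to Gehrke and J\'onsson (1994), with no argument given. So there is no ``paper's own proof'' to compare your sketch against; you are supplying a proof where the paper is content to quote the literature.

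Your outline is essentially the standard argument and is broadly correct. The identification of $\mathcal{J}^{\infty}(\mathcal{L}^{\sigma})$ with the filter elements coming from prime filters (and dually for $\mathcal{M}^{\infty}$) is exactly right, and your derivation of join-density from $F=\bigcap\{P\supseteq F:P\text{ prime}\}$ via the order-reversing correspondence is clean. The one place I would tighten is the complete-distributivity step: as written, the ``compactness lets one replace infinite joins and meets by finite ones and then distributivity of $\mathcal{L}$ produces the choice function $\alpha$'' is vague and, if $I$ is infinite, does not obviously yield a global $\alpha$. A cleaner route is to show directly that each prime-filter element $p=\bigwedge P$ is completely \emph{join-prime} in $\mathcal{L}^{\sigma}$ (if $p\le\bigvee_\ell z_\ell$ with $z_\ell$ filter elements and $p\not\le z_\ell$ for every $\ell$, pick $a_\ell\in F_\ell\setminus P$; then $p\le\bigvee_\ell a_\ell$, compactness gives a finite subjoin in $P$, and primeness of $P$ yields a contradiction). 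Once every element is a join of completely join-prime elements, complete distributivity follows by Raney's characterisation, with no choice-function bookkeeping needed. Alternatively, one can simply observe that $\mathcal{L}^{\sigma}$ is concretely the lattice of up-sets of $(\operatorname{PF}(\mathcal{L}),\subseteq)$, which is a complete ring of sets and hence completely distributive for free; this is in fact how the cited reference proceeds.
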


We also note that canonical extensions commute with dual order and Cartesian product. That is, $(\mathcal{L}^{op})^{\sigma} \cong (\mathcal{L}^{\sigma})^{op}$ and $(\mathcal{L}^n)^{\sigma} \cong (\mathcal{L}^{\sigma})^n$. Now we overview bounded distributive lattices expansions, that is,
bounded distributive lattices enriched with the additional family of operators, and their canonical extensions \cite{Expan}.

  Let $\mathcal{K}$, $\mathcal{L}$ be bounded distributive lattices and
$f : \mathcal{K} \to \mathcal{L}$.
Let us define maps $f^{\sigma}$, $f^{\pi} : \mathcal{K}^{\sigma} \to \mathcal{L}^{\sigma}$
as follows:
\begin{enumerate}
    \item $f^{\pi}(u) = \bigwedge \{ \bigvee \{ f(a) \: | \: a \in \mathcal{K}, x \leq a \leq y \} \: | \: \mathcal{F}(\mathcal{K}^{\sigma}) \ni x \leq u \leq y \in \mathcal{I}(\mathcal{K}^{\sigma}) \}$
    \item $f^{\sigma}(u) = \bigvee \{ \bigwedge \{ f(a) \: | \: a \in \mathcal{K}, x \leq a \leq y \} \: | \: \mathcal{F}(\mathcal{K}^{\sigma}) \ni x \leq u \leq y \in \mathcal{I}(\mathcal{K}^{\sigma}) \}$
\end{enumerate}
Every element of $\mathcal{K}$ is a filter element and an ideal element of $\mathcal{L}^{\sigma}$,
then $f^{\sigma}$ and $f^{\pi}$ both extend $f$. It is clear that $f^{\sigma} \leq f^{\pi}$ in a pointwise order. We formulate the following fact about extensions of maps on bounded distributive lattices that we are going to use further. If the original map $f$ is order preserving, then one may simplify the definitions of $f^{\sigma}$ and $f^{\pi}$ as follows,
e.g. \cite{Distr} \cite{Expan}:
\begin{prop}
  $ $

  Let $\mathcal{K}$, $\mathcal{L}$ be bounded distributive lattices and $f : \mathcal{K} \to \mathcal{L}$. Then:
  \begin{enumerate}
    \item $f^{\pi}(u) = \bigwedge \{ \bigvee \{ f(a) \: | \: a \in \mathcal{K}, a \leq y \} \: | \: u \leq y \in \mathcal{I}(\mathcal{K}^{\sigma}) \}$, where $u \in \mathcal{K}^{\sigma}$
    \item $f^{\sigma}(u) = \bigvee \{ \bigwedge \{ f(a) \: | \: a \in \mathcal{K}, x \leq a \} \: | \: u \geq x \in \mathcal{F}(\mathcal{K}^{\sigma}) \}$, where $u \in \mathcal{K}^{\sigma}$
    \item $f^{\sigma}(x) = f^{\pi}(x) = \bigwedge \{ f(a) \: | \: x \leq a \in \mathcal{K} \}$, where $x \in \mathcal{F}(\mathcal{K}^{\sigma})$
    \item $f^{\sigma}(y) = f^{\pi}(y) = \bigvee \{ f(a) \: | \: a \in \mathcal{K}, a \leq y \}$, where $y \in \mathcal{I}(\mathcal{K}^{\sigma})$
  \end{enumerate}
\end{prop}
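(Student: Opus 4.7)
The plan is to establish items (iii) and (iv) first by direct computation from the general formulas, and then to reduce (i) and (ii) to them using a compactness-plus-monotonicity argument.

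First, for item (iv), I would fix $y \in \mathcal{I}(\mathcal{K}^{\sigma})$ and analyse the general $\sigma$-formula term by term. For the $\geq$ direction, given any $a_0 \in \mathcal{K}$ with $a_0 \leq y$, I would evaluate at the admissible pair $(x, y') = (a_0, y)$: monotonicity of $f$ makes $f(a_0)$ the minimum of $\{f(b) : a_0 \leq b \leq y,\, b \in \mathcal{K}\}$, so the inner meet collapses to $f(a_0)$, giving $f^{\sigma}(y) \geq f(a_0)$; joining over $a_0$ yields the desired bound. For the $\leq$ direction, given an admissible pair $(x, y')$ with $x \leq y \leq y'$, I would invoke the compactness axiom on $x \leq y$ (writing $x = \bigwedge X$ and $y = \bigvee Y$ with $X, Y \subseteq \mathcal{K}$) to obtain $x' \in \mathcal{K}$ with $x \leq x' \leq y \leq y'$; then $x' \in \{a \in \mathcal{K} : x \leq a \leq y'\}$, so the inner meet is $\leq f(x')$, which is itself $\leq \bigvee \{f(a) : a \leq y,\, a \in \mathcal{K}\}$ since $x' \leq y$. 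An entirely analogous argument gives $f^{\pi}(y) = \bigvee \{f(a) : a \leq y\}$, and item (iii) follows by the order-dual argument (approximating $x \leq y_0$ from above by some $y' \in \mathcal{K}$ with $x \leq y' \leq y_0$).

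Next, for item (i), the easy inequality $f^{\pi}(u) \leq \bigwedge \{\bigvee \{f(a) : a \leq y\} : u \leq y \in \mathcal{I}\}$ is obtained by specialising the general formula to $x = \bot \in \mathcal{F} \cap \mathcal{K}$: by item (iv) this exhibits $f^{\pi}(y) = \bigvee \{f(a) : a \leq y\}$ as one of the terms in the outer meet, for each ideal element $y \geq u$. For the reverse inequality I would fix any admissible pair $(x, y)$ with $x \leq u \leq y$, invoke compactness on $x \leq y$ to produce $x' \in \mathcal{K}$ with $x \leq x' \leq y$, and then observe that for every $a \in \mathcal{K}$ with $a \leq y$ the element $b := x' \vee a$ lies in $\mathcal{K}$, satisfies $x \leq x' \leq b \leq y$, and dominates $a$; monotonicity of $f$ therefore gives $f(a) \leq f(b) \leq \bigvee \{f(c) : x \leq c \leq y,\, c \in \mathcal{K}\}$. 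Joining over $a$ yields $\bigvee \{f(a) : a \leq y\} \leq \bigvee \{f(c) : x \leq c \leq y\}$, and meeting over admissible $(x, y)$ delivers the required inequality. Item (ii) is proved by the fully order-dual argument.

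The main obstacle is this reverse inequality in (i) and its analogue in (iv): the general formula ranges over filter elements $x$ which need not lie in $\mathcal{K}$, so monotonicity of $f$ cannot be applied to $x$ directly. The compactness axiom is precisely what supplies a $\mathcal{K}$-approximation $x'$ sitting between $x$ and $y$, after which the join $x' \vee a$ remains in $\mathcal{K}$ and monotonicity can be invoked. Once this bridge from non-$\mathcal{K}$ filter elements to genuine lattice elements is built, the rest of the argument is bookkeeping inside the general formula.
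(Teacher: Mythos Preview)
Your argument is correct, but there is nothing in the paper to compare it against: this proposition appears in the appendix as a background fact, stated without proof and attributed to the standard references on canonical extensions (Gehrke--J\'onsson and Gehrke--Nagahashi--Venema). The hypothesis that $f$ is order-preserving is not in the enunciation itself but in the sentence immediately preceding it, and you use it correctly throughout. Your proof---obtaining an interpolant $x' \in \mathcal{K}$ between a filter element and an ideal element via compactness, and then pushing arbitrary $a \leq y$ into the constrained index set by forming $x' \vee a$ so that monotonicity applies---is the standard route taken in those references, so your write-up would serve perfectly well as the omitted proof.
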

As a consequence, for each $u \in \mathcal{K}^{\sigma}$ one has $f^{\sigma}(u) = \bigvee \{ f^{\sigma}(x) \: | \: u \geq x \in \mathcal{F}(\mathcal{K}^{\sigma}) \}$.
The third and fourth items denote that $f^{\sigma}$ and $f^{\pi}$ send filter (ideal) elements to filter (ideal) ones. A map $f : \mathcal{K} \to \mathcal{L}$ is called \emph{smooth}, if $f^{\pi}(u) = f^{\sigma}(u)$ for each $u \in \mathcal{K}^{\sigma}$. In particular, a map is smooth if it preserves or reverses finite joins or meets.
For instance, modal operators $\Box$ and $\Diamond$ are smooth and their smoothness since $\Box$ and $\Diamond$ are meet and join hemimorphisms correspondingly. Thus, $\Box^{\sigma} = \Box^{\pi}$ and $\Diamond^{\sigma} = \Diamond^{\pi}$.



\bibliographystyle{aiml20}
\bibliography{Rogozin}

\end{document}